\providecommand{\U}[1]{\protect\rule{.1in}{.1in}}
\newtheorem{theorem}{Theorem}
\newtheorem{corollary}[theorem]{Corollary}
\newtheorem{lemma}[theorem]{Lemma}
\newtheorem{proposition}[theorem]{Proposition}
\newtheorem{remark}[theorem]{Remark}
\newenvironment{proof}[1][Proof]{\noindent\textbf{#1.} }{\ \rule{0.5em}{0.5em}}
\begin{document}

\title{\textit{On a singular minimizing problem}}
\author{{\small \textbf{Grey Ercole\thanks{Corresponding author}\ \ and Gilberto de
Assis Pereira}}\\{\small \textit{Departamento de Matem\'{a}tica - Universidade Federal de Minas
Gerais}}\\{\small \textit{Belo Horizonte, MG, 30.123-970, Brazil.}}\\{\small grey@mat.ufmg.br, gilbertoapereira@yahoo.com.br}}
\maketitle

\begin{abstract}
For each $q\in(0,1)$ let
\[
\lambda_{q}(\Omega):=\inf\left\{  \left\Vert \nabla v\right\Vert
_{L^{p}(\Omega)}^{p}:v\in W_{0}^{1,p}(\Omega)\;\mathrm{and}\;\int_{\Omega
}\left\vert v\right\vert ^{q}\mathrm{d}x=1\right\}  ,
\]
where $p>1$ and $\Omega$ is a bounded and smooth domain of $\mathbb{R}^{N},$
$N\geq2.$ We first show that%
\[
0<\mu(\Omega):=\lim_{q\rightarrow0^{+}}\lambda_{q}(\Omega)\left\vert
\Omega\right\vert ^{\frac{p}{q}}<\infty,
\]
where $\left\vert \Omega\right\vert =\int_{\Omega}\mathrm{d}x.$ Then, we prove
that
\[
\mu(\Omega)=\min\left\{  \left\Vert \nabla v\right\Vert _{L^{p}(\Omega)}%
^{p}:v\in W_{0}^{1,p}(\Omega)\;\mathrm{and}\;\lim_{q\rightarrow0^{+}}\left(
\frac{1}{\left\vert \Omega\right\vert }\int_{\Omega}\left\vert v\right\vert
^{q}\mathrm{d}x\right)  ^{\frac{1}{q}}=1\right\}
\]
and that $\mu(\Omega)$ is reached by a function $u\in W_{0}^{1,p}(\Omega),$
which is positive in $\Omega,$ belongs to $C^{0,\alpha}(\overline{\Omega}),$
for some $\alpha\in(0,1),$ and satisfies%
\[
\mathrm{\ }-\operatorname{div}(\left\vert \nabla u\right\vert ^{p-2}\nabla
u)=\mu(\Omega)\left\vert \Omega\right\vert ^{-1}u^{-1}\quad\mathrm{in\quad
\Omega,\quad and\quad}\int_{\Omega}\log u\mathrm{d}x=0.
\]
We also show that $\mu(\Omega)^{-1}$ is the best constant $C$ in the following
log-Sobolev type inequality%
\[
\exp\left(  \frac{1}{\left\vert \Omega\right\vert }\int_{\Omega}\log\left\vert
v\right\vert ^{p}\mathrm{d}x\right)  \leq C\left\Vert \nabla v\right\Vert
_{L^{p}(\Omega)}^{p},\quad v\in W_{0}^{1,p}(\Omega)
\]
and that this inequality becomes an equality if, and only if, $v$ is a scalar
multiple of $u$ and $C=\mu(\Omega)^{-1}.\medskip$

\noindent\textbf{2010 Mathematics Subject Classification.} 35B40; 35J25; 35J92.

\noindent\textbf{Keywords:}{\small {\ Asymptotic behavior, log-Sobolev
inequality, $p$-Laplacian, singular problem.}}

\end{abstract}

\section{Introduction}

Let $p>1$ be fixed and let $\Omega\subset\mathbb{R}^{N},$ $N\geq2,$ be a
bounded and smooth domain. For each $q\in(0,1)$ let us define
\begin{equation}
\lambda_{q}(\Omega):=\inf\left\{  \left\Vert \nabla v\right\Vert _{p}^{p}:v\in
W_{0}^{1,p}(\Omega)\;\mathrm{and}\;\int_{\Omega}\left\vert v\right\vert
^{q}\mathrm{d}x=1\right\}  , \label{lambq}%
\end{equation}
where $\left\Vert \cdot\right\Vert _{s}$ denotes the standard norm of the
Lebesgue space $L^{s}(\Omega),$ $1\leq s\leq\infty.$

As proved in \cite{Anello}, $\lambda_{q}(\Omega)$ is achieved by a positive
function $u_{q}\in W_{0}^{1,p}(\Omega)\cap C^{1}(\Omega)$ satisfying the
singular Dirichlet problem
\begin{equation}
\left\{
\begin{array}
[c]{ll}%
-\Delta_{p}v=\lambda_{q}(\Omega)\left\vert v\right\vert ^{q-2}v &
\mathrm{in\ }\Omega\\
v=0 & \mathrm{on\ }\partial\Omega,
\end{array}
\right.  \label{qbvp}%
\end{equation}
in the weak sense, where $\Delta_{p}v=\operatorname{div}\left(  \left\vert
\nabla v\right\vert ^{p-2}\nabla v\right)  $ is the $p$-Laplacian operator.
Moreover, it follows from \cite[Theorem 1.1 (i)]{GST1} that $u_{q}\in
C^{1,\alpha}(\overline{\Omega}),$ for some $\alpha\in(0,1).$

In this paper we first show that
\begin{equation}
0<\mu(\Omega):=\lim_{q\rightarrow0^{+}}\lambda_{q}(\Omega)\left\vert
\Omega\right\vert ^{\frac{p}{q}}<\infty, \label{mu1}%
\end{equation}
where $\left\vert D\right\vert $ stands for the $N$-dimensional Lebesgue
volume of $D\subset\mathbb{R}^{N},$ \textrm{i.\thinspace e.} $\left\vert
D\right\vert =\int_{D}\mathrm{d}x.$

Then, we prove that
\begin{equation}
\mu(\Omega)=\min\left\{  \left\Vert \nabla v\right\Vert _{p}^{p}:v\in
W_{0}^{1,p}(\Omega)\;\mathrm{and}\;\lim_{q\rightarrow0^{+}}\left(  \frac
{1}{\left\vert \Omega\right\vert }\int_{\Omega}\left\vert v\right\vert
^{q}\mathrm{d}x\right)  ^{\frac{1}{q}}=1\right\}  \label{defmu}%
\end{equation}
and that the minimum is reached by a function $u\in W_{0}^{1,p}(\Omega),$
which is positive in $\Omega,$ belongs to $C^{0,\alpha}(\overline{\Omega}),$
for some $\alpha\in(0,1),$ and satisfies:

\begin{enumerate}
\item[(i)] $u=\lim_{q\rightarrow0^{+}}\left\vert \Omega\right\vert ^{\frac
{1}{q}}u_{q}$ in $W_{0}^{1,p}(\Omega);$

\item[(ii)] $-\Delta_{p}u=\mu(\Omega)\left\vert \Omega\right\vert ^{-1}u^{-1}$
in $\Omega;$ and

\item[(iii)] $\int_{\Omega}\log u\mathrm{d}x=0.$
\end{enumerate}

Exploring (\ref{defmu}) we also prove that $\mu(\Omega)^{-1}$ is the best
constant $C$ in the following log-Sobolev type inequality%
\[
\exp\left(  \frac{1}{\left\vert \Omega\right\vert }\int_{\Omega}\log\left\vert
v\right\vert ^{p}\mathrm{d}x\right)  \leq C\left\Vert \nabla v\right\Vert
_{p}^{p},\quad v\in W_{0}^{1,p}(\Omega),
\]
and that $\mu(\Omega)^{-1}$ is reached if, and only if, $v$ is a scalar
multiple of $u$, which is the unique case where the inequality becomes an
equality. Up to our knowledge, these facts are entirely new.

It is easy to check that for each fixed $\lambda>0$ the function $u_{\lambda
}:=\left(  \frac{\lambda\left\vert \Omega\right\vert }{\mu(\Omega)}\right)
^{\frac{1}{p}}u\ $is a positive weak solution of the singular problem
\begin{equation}
\left\{
\begin{array}
[c]{ll}%
-\Delta_{p}v=\lambda v^{-1} & \mathrm{in\ }\Omega,\\
v=0 & \mathrm{on\ }\partial\Omega.
\end{array}
\right.  \label{lsing}%
\end{equation}

The function $u_{\lambda}$ is, in fact, the unique positive solution of
(\ref{lsing}). This uniqueness result follows from a simple and well-known
inequality involving vectors of $\mathbb{R}^{N}$. Existence and regularity of
weak solutions for (\ref{lsing}) were first studied in the particular case
$p=2$ (see \cite{CRT, LM, Stuart}), whereas the case $p>1$ has received more
attention in the last decade (see \cite{Chu, GST, GST1, Mohammed} and
references therein).

We remark that the differentiability of the functional $v\in W_{0}%
^{1,p}(\Omega)\mapsto\lambda%
%TCIMACRO{\dint _{\Omega}}%
%BeginExpansion
{\displaystyle\int_{\Omega}}
%EndExpansion
\log\left\vert v\right\vert \mathrm{d}x\in\lbrack-\infty,\infty)$ is a
delicate question, which makes it difficult to apply variational methods to
obtain the positive solution of (\ref{lsing}). Thus, $u_{\lambda}$ has
generally been obtained by nonvariational methods, mainly the sub-super
solution method. As for regularity, it is proved in \cite[Theorem 2.2
(ii)]{GST1} that $u_{\lambda}\in C^{0,\alpha}(\overline{\Omega}),$ for some
$\alpha\in(0,1).$

We emphasize that besides providing a new existence proof of $u_{\lambda},$ we
show that
\[
\int_{\Omega}\log u_{\lambda}\mathrm{d}x=\frac{\left\vert \Omega\right\vert
}{p}\log\left(  \frac{\lambda\left\vert \Omega\right\vert }{\mu(\Omega
)}\right)  \in(-\infty,\infty).
\]
This property of $u_{\lambda}$ was not known up to now. It comes from the
connection between (\ref{lsing}) and the minimizing problem (\ref{defmu}).

Also in this paper, we show that the formal energy functional associated with
(\ref{lsing}),
\[
J_{\lambda}(v):=\left\{
\begin{array}
[c]{ll}%
\dfrac{1}{p}%
{\displaystyle\int_{\Omega}}
\left\vert \nabla v\right\vert ^{p}\mathrm{d}x-\lambda%
{\displaystyle\int_{\Omega}}
\log\left\vert v\right\vert \mathrm{d}x, & \mathrm{if}\;%
{\displaystyle\int_{\Omega}}
\log\left\vert v\right\vert \mathrm{d}x\in(-\infty,\infty)\\
\infty, & \mathrm{if}\;%
{\displaystyle\int_{\Omega}}
\log\left\vert v\right\vert \mathrm{d}x=-\infty,
\end{array}
\right.
\]
attains its minimum value $\dfrac{\lambda\left\vert \Omega\right\vert }%
{p}\left(  1-\log\left(  \frac{\lambda\left\vert \Omega\right\vert }%
{\mu(\Omega)}\right)  \right)  $ only at the functions $u_{\lambda}$ and
$-u_{\lambda}.$

We end the paper by describing the asymptotic behavior of the pair
$(\lambda_{q}(\Omega),\left\Vert u_{q}\right\Vert _{\infty}),$ as
$q\rightarrow0^{+}.$ That is, we determine when these quantities either go to
$0$ or to $\infty$ or remain bounded from above and from below, when
$q\rightarrow0^{+}.$ More precisely, we obtain directly from (\ref{mu1}) that
\begin{equation}
\lim_{q\rightarrow0^{+}}\lambda_{q}(\Omega)=\left\{
\begin{array}
[c]{ll}%
\infty & \mathrm{if}\;\left\vert \Omega\right\vert <1\\
\mu(\Omega) & \mathrm{if}\;\left\vert \Omega\right\vert =1\\
0 & \mathrm{if}\;\left\vert \Omega\right\vert >1,
\end{array}
\right.  \label{assy1}%
\end{equation}
and apply lower and upper estimates (derived in Section \ref{S1}) to show that%
\begin{equation}
\lim_{q\rightarrow0^{+}}\left\Vert u_{q}\right\Vert _{\infty}=\left\{
\begin{array}
[c]{ll}%
\infty & \mathrm{if}\quad\left\vert \Omega\right\vert <1\\
0 & \mathrm{if}\quad\left\vert \Omega\right\vert >1
\end{array}
\right.  \label{assy2}%
\end{equation}
and that
\begin{equation}
0<A\mu(\Omega)^{\frac{1}{p}}\leq\lim_{q\rightarrow0^{+}}\left\Vert
u_{q}\right\Vert _{\infty}\leq B\mu(\Omega)^{\frac{1}{p}},\quad\mathrm{if}%
\;\left\vert \Omega\right\vert =1, \label{assy3}%
\end{equation}
where $A$ and $B$ are positive constants that depend only on $N$ and $p.$

The result in (\ref{assy1}) for the case $\left\vert \Omega\right\vert <1$ has
recently been obtained in \cite{Anello}. The cases $\left\vert \Omega
\right\vert \geq1$ in (\ref{assy1}) as well as (\ref{assy2}) and (\ref{assy3})
are new observations.

Thus, (\ref{assy1}), (\ref{assy2}) and (\ref{assy3}) provide complementary
information on how the function $q\in(0,p^{\star})\mapsto(\lambda_{q}%
(\Omega),\left\Vert u_{q}\right\Vert _{\infty})\in\mathbb{R}^{2}$ behaves at
the endpoints of its domain. In fact, the behavior of this function as
$q\rightarrow p^{\star}$ is well known:
\begin{equation}
\lim_{q\rightarrow p^{\star}}\lambda_{q}(\Omega)=\left\{
\begin{array}
[c]{ll}%
S_{N,p}, & \;\mathrm{if}\quad1<p<N\\
0, & \;\mathrm{if}\quad p=N>1\\
\Lambda_{p}(\Omega), & \;\mathrm{if}\quad p>N,
\end{array}
\right.  \label{asyl}%
\end{equation}
and
\begin{equation}
\lim_{q\rightarrow p^{\star}}\left\Vert u_{q}\right\Vert _{\infty}=\left\{
\begin{array}
[c]{ll}%
\infty, & \;\mathrm{if}\quad1<p<N\\
C_{N}, & \;\mathrm{if}\quad p=N>1\\
1, & \;\mathrm{if}\quad p>N,
\end{array}
\right.  \label{asyu}%
\end{equation}
where
\[
\Lambda_{p}(\Omega):=\min\left\{  \left\Vert \nabla u\right\Vert _{p}^{p}:u\in
W_{0}^{1,p}(\Omega)\text{ \ and \ }\left\Vert u\right\Vert _{\infty
}=1\right\}  ,
\]
$S_{N,p}$ is the well-known Sobolev constant, defined by%
\begin{equation}
S_{N,p}:=\pi^{\frac{p}{2}}N\left(  \frac{N-p}{p-1}\right)  ^{p-1}\left(
\frac{\Gamma(N/p)\Gamma(1+N-N/p)}{\Gamma(1+N/2)\Gamma(N)}\right)  ^{\frac
{p}{N}}, \label{SNp}%
\end{equation}
$\Gamma$ denoting the Gamma Function, and $C_{N}$ is a positive constant that
does not depend on $\Omega.$

For (\ref{asyl}) and (\ref{asyu}) we refer to \cite{Anello,GEJMAA},
\cite{RenWei-p} and \cite{GGilb}, respectively to the cases $1<p<N,$ $p=N>1$
and $p>N>1.$

\section{Preliminaries\label{S1}}

In this section, we present some properties of the weak solutions of the
singular Dirichlet problem
\begin{equation}
\left\{
\begin{array}
[c]{ll}%
-\Delta_{p}v=\lambda v^{q-1} & \mathrm{in\ }\Omega,\quad0\leq q<1,\quad
\lambda>0\\
v>0 & \mathrm{in\ }\Omega,\\
v=0 & \mathrm{on\ }\partial\Omega
\end{array}
\right.  \label{singdiric}%
\end{equation}
which will be used in the paper. A weak solution of (\ref{singdiric}) is a
function $v\in W_{0}^{1,p}(\Omega)$ such that $\operatorname{essinf}_{K}v>0$
in each compact $K\subset\Omega$ and
\begin{equation}
\int_{\Omega}\left\vert \nabla v\right\vert ^{p-2}\nabla v\cdot\nabla
\varphi\mathrm{d}x=\lambda\int_{\Omega}v^{q-1}\varphi\mathrm{d}%
x,\;\mathrm{for\ all}\;\varphi\in W_{0}^{1,p}(\Omega).\label{weak+}%
\end{equation}

Next, we present a simple uniqueness proof for (\ref{singdiric}), which makes
use of the following well-known inequality:
\begin{equation}
(\left\vert x\right\vert ^{p-2}x-\left\vert y\right\vert ^{p-2}y)\cdot
(x-y)\geq0\;\mathrm{for\ all}\;x,y\in\mathbb{R}^{N}. \label{vectors}%
\end{equation}

\begin{proposition}
\label{propuniq}Let $u_{1},u_{2}\in W_{0}^{1,p}(\Omega)$ be weak solutions of
(\ref{singdiric}). Then, $u_{2}=u_{1}$ \textrm{a.\thinspace e.} in $\Omega.$
\end{proposition}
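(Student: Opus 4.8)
The plan is to use the two weak solutions $u_1,u_2$ to test each other's equation and exploit the monotonicity inequality \eqref{vectors}. The natural test function for this kind of singular problem is a symmetrized combination that keeps things in $W_0^{1,p}(\Omega)$ while producing the vector pairing $(\left|\nabla u_1\right|^{p-2}\nabla u_1-\left|\nabla u_2\right|^{p-2}\nabla u_2)\cdot(\nabla u_1-\nabla u_2)$. Concretely, I would plug $\varphi = \dfrac{u_1^p-u_2^p}{u_1^{p-1}}$ into the weak formulation \eqref{weak+} for $u_1$ and, symmetrically, $\varphi = \dfrac{u_2^p-u_1^p}{u_2^{p-1}}$ into the weak formulation for $u_2$, then add. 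On the right-hand side the sum of the $\lambda$-terms becomes
\[
\lambda\int_\Omega\left(u_1^{q-1}\,\frac{u_1^p-u_2^p}{u_1^{p-1}} + u_2^{q-1}\,\frac{u_2^p-u_1^p}{u_2^{p-1}}\right)\mathrm{d}x
=\lambda\int_\Omega (u_1^{q-p}-u_2^{q-p})(u_1^p-u_2^p)\,\mathrm{d}x,
\]
and since $q-p<0$ the map $t\mapsto t^{q-p}$ is decreasing while $t\mapsto t^p$ is increasing, so each integrand is $\le 0$; hence the right-hand side is $\le 0$.

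For the left-hand side I would expand $\nabla\!\left(\dfrac{u_1^p-u_2^p}{u_1^{p-1}}\right)$ and $\nabla\!\left(\dfrac{u_2^p-u_1^p}{u_2^{p-1}}\right)$ and collect terms. This is the Díaz–Saá type computation: after simplification the sum of the two left-hand sides equals
\[
\int_\Omega\left(\left|\nabla u_1\right|^p+\left|\nabla u_2\right|^p\right)\mathrm{d}x
-\int_\Omega\left(\frac{u_2^p}{u_1^p}\left|\nabla u_1\right|^{p-2}\nabla u_1\cdot\nabla u_1 + \text{(symmetric terms)}\right)\mathrm{d}x,
\]
which, grouped correctly, is nonnegative by the pointwise inequality that for all $a,b>0$ and vectors $\xi,\eta$,
\[
\left|\xi\right|^p - \left|\eta\right|^{p-2}\eta\cdot\left(\xi - \frac{b}{a}\,?\right)\ \ge\ 0,
\]
i.e. the well-known convexity/monotonicity fact underlying \eqref{vectors} applied to the "shifted" gradients. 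The cleaner route is to quote the hidden convexity lemma: for $u_1,u_2>0$ with $u_i\in W_0^{1,p}$, $\int_\Omega\left(\left|\nabla u_1\right|^{p-2}\nabla u_1 - \left|\nabla u_2\right|^{p-2}\nabla u_2\right)\cdot\nabla\!\left(\dfrac{u_1^p-u_2^p}{u_1^{p-1}} - \dfrac{u_1^p-u_2^p}{u_2^{p-1}}\cdot(-1)\right)\mathrm{d}x \ge 0$, with equality iff $u_1=c\,u_2$.

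Combining, I get $0\le(\text{LHS sum})=(\text{RHS sum})\le 0$, so both sides vanish. Equality on the right forces $u_1^p=u_2^p$ a.e. on the set where $u_1\ne u_2$ is impossible unless that set is null; more directly, equality in the strictly monotone inequality $(u_1^{q-p}-u_2^{q-p})(u_1^p-u_2^p)=0$ a.e. gives $u_1=u_2$ a.e. in $\Omega$.

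The main obstacle I anticipate is \emph{justifying that the test functions are admissible}, i.e. that $\dfrac{u_1^p-u_2^p}{u_1^{p-1}}\in W_0^{1,p}(\Omega)$. This is delicate precisely because of the singularity: near $\partial\Omega$ both $u_i\to 0$, and one must control the quotient and its gradient there. The standard fix is a truncation/approximation argument — work first with $(u_i+\varepsilon)$ or with $u_i\wedge u_j$-type truncations, use that $\operatorname{essinf}_K u_i>0$ on compact $K$, establish the inequality on an exhausting sequence of compact sets, and pass to the limit via Fatou on the (sign-definite) right-hand side and weak lower semicontinuity on the left. Once admissibility is secured, the rest is the routine Díaz–Saá algebra together with \eqref{vectors}.
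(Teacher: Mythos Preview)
Your approach is far more elaborate than necessary, and the elaboration creates real gaps rather than resolving them. The paper's proof uses the obvious test function $\varphi=u_{2}-u_{1}$, which is automatically in $W_{0}^{1,p}(\Omega)$, so no admissibility issue arises at all. Subtracting the two weak formulations tested against this single $\varphi$ gives
\[
\int_{\Omega}\bigl(|\nabla u_{2}|^{p-2}\nabla u_{2}-|\nabla u_{1}|^{p-2}\nabla u_{1}\bigr)\cdot\nabla(u_{2}-u_{1})\,\mathrm{d}x
=\lambda\int_{\Omega}\bigl(u_{2}^{q-1}-u_{1}^{q-1}\bigr)(u_{2}-u_{1})\,\mathrm{d}x.
\]
The left side is $\ge 0$ by \eqref{vectors}; the right side is $\le 0$ because $t\mapsto t^{q-1}$ is strictly decreasing (recall $q-1<0$). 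Hence both integrands vanish a.e., and $u_{1}=u_{2}$ follows immediately. No D\'iaz--Sa\'a machinery, no quotients, no truncation.

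Your route via the test functions $\dfrac{u_{1}^{p}-u_{2}^{p}}{u_{1}^{p-1}}$ is the kind of argument one reaches for when the right-hand side is \emph{not} monotone decreasing in $u$ (e.g.\ sublinear but nonsingular problems), precisely because then the simple $(u_{2}-u_{1})$ test does not give a sign. Here the singularity makes the nonlinearity strictly decreasing, so that extra structure is unused. Moreover, as written your proposal has genuine holes: the left-hand-side computation is left as a sketch with a literal ``?'' and an unstated ``hidden convexity lemma'', and the admissibility of $\dfrac{u_{1}^{p}-u_{2}^{p}}{u_{1}^{p-1}}$ in $W_{0}^{1,p}(\Omega)$ is only promised via an unspecified truncation/Fatou limit. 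Those gaps would need to be closed to make your argument rigorous, whereas the direct approach above needs none of that.
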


\begin{proof}
Taking $\varphi=u_{2}-u_{1}$ in (\ref{weak+}) we obtain
\begin{equation}
\int_{\Omega}(\left\vert \nabla u_{2}\right\vert ^{p-2}\nabla u_{2}-\left\vert
\nabla u_{1}\right\vert ^{p-2}\nabla u_{1})\cdot\nabla(u_{2}-u_{1}%
)\mathrm{d}x=\lambda\int_{\Omega}\left(  u_{2}^{q-1}-u_{1}^{q-1}\right)
(u_{2}-u_{1})\mathrm{d}x. \label{weak++}%
\end{equation}

It follows from (\ref{vectors}) that the integrand in the left-hand side of
(\ref{weak++}) is nonnegative. It is easy to see that the integrand of the
right-hand side of (\ref{weak++}) cannot be positive. Thus, both of them must
be null almost everywhere in $\Omega,$ which implies that $u_{2}=u_{1}$
\textrm{a.\thinspace e.} in $\Omega.$
\end{proof}

In the sequel we derive estimates for the weak solutions of (\ref{singdiric})
depending explicitly on $q\in\lbrack0,1).$

Let us recall that
\begin{equation}
\lambda_{q}(D^{\ast})\leq\lambda_{q}(D),\quad0<q<p^{\star} \label{aux1}%
\end{equation}
where $D$ is a general bounded and smooth domain of $\mathbb{R}^{N}$ and
$D^{\ast}$ is the ball centered at the origin and with the same volume as $D,$
that is, $\left\vert D^{\ast}\right\vert =\left\vert D\right\vert .$

Inequality (\ref{aux1}) comes from well known properties of Schwarz
symmetrization (see \cite{Kw}) and, among other important utilities, it
provides a lower bound for $\lambda_{q}(D)$ in terms of $\left\vert
D\right\vert $ and $\lambda_{q}(B_{1}),$ where $B_{1}$ denotes the unit ball
of $\mathbb{R}^{N}.$ In fact, one can show that
\begin{equation}
\lambda_{q}(D^{\ast})=\lambda_{q}(B_{1})\left(  \frac{\left\vert D^{\ast
}\right\vert }{\omega_{N}}\right)  ^{1-\frac{p}{N}-\frac{p}{q}},\quad
0<q<p^{\star} \label{aux2}%
\end{equation}
where $\omega_{N}=\left\vert B_{1}\right\vert .$

Hence, by combining (\ref{aux1}) and (\ref{aux2}) one obtains the following
version of the well-known Poincar\'{e}-Sobolev inequality%
\[
\left(  \int_{D}\left\vert v\right\vert ^{q}\mathrm{d}x\right)  ^{\frac{p}{q}%
}\leq\frac{\left\vert D\right\vert ^{\frac{p}{q}+\frac{p}{N}-1}}{\lambda
_{q}(B_{1})\left\vert B_{1}\right\vert ^{\frac{p}{q}+\frac{p}{N}-1}}\left\Vert
\nabla v\right\Vert _{L^{p}(D)}^{p},\;\mathrm{for\ all}\;v\in W_{0}%
^{1,p}(D)\;\mathrm{and}\;0<q<p^{\star}.
\]
When $q=p$ we have
\begin{equation}
\int_{D}|v|^{p}\mathrm{d}x\leq\frac{|D|^{\frac{p}{N}}}{C_{N,p}}\int_{D}|\nabla
v|^{p}\,\mathrm{d}x,\;\mathrm{for\ all}\;v\in W_{0}^{1,p}(D), \label{D}%
\end{equation}
where $C_{N,p}=\lambda_{p}(B_{1})\left\vert B_{1}\right\vert ^{\frac{p}{N}},$
a positive constant that depends only on $p$ and $N.$

The following lemma is an adaptation of \cite[Theorem 4.1]{GECCM} which, in
its turn, is based on classical set level techniques (see \cite{Bandle,Ladyz}).

\begin{lemma}
\label{linfty}If $u\in W_{0}^{1,p}(\Omega)$ is a weak solution of
(\ref{singdiric}), then $u\in L^{\infty}(\Omega)$ and%
\begin{equation}
\left\Vert u\right\Vert _{\infty}\leq K_{N,p}\left\vert \Omega\right\vert
^{\frac{p}{N(p-q)}}\lambda^{\frac{1}{p-q}}, \label{x4b}%
\end{equation}
where $K_{N,p}$ is a positive constant depending only on $N$ and $p.$
\end{lemma}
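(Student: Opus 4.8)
The plan is to adapt the level-set iteration of \cite[Theorem 4.1]{GECCM}. Fix $k>0$, put $A_{k}:=\{x\in\Omega:u(x)>k\}$, and note that $(u-k)^{+}\in W_{0}^{1,p}(\Omega)$ is supported in $A_{k}$, where $u>k>0$, so it is an admissible test function in (\ref{weak+}) (the right-hand side is finite there because $u^{q-1}\leq k^{q-1}$). Choosing $\varphi=(u-k)^{+}$ gives
\[
\int_{A_{k}}|\nabla u|^{p}\,\mathrm{d}x=\lambda\int_{A_{k}}u^{q-1}(u-k)\,\mathrm{d}x\leq\lambda k^{q-1}\int_{A_{k}}(u-k)\,\mathrm{d}x ,
\]
the pointwise bound $u^{q-1}\leq k^{q-1}$ on $A_{k}$ (valid since $q-1<0$ and $u>k$) being the one elementary fact that renders the singular right-hand side harmless on super-level sets.

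Next I would insert a Sobolev-type estimate. For $1<p<N$ use $\|(u-k)^{+}\|_{p^{\star}}^{p}\leq S_{N,p}^{-1}\int_{A_{k}}|\nabla u|^{p}\,\mathrm{d}x$ with $p^{\star}=Np/(N-p)$; for $p=N$ use the embedding $W_{0}^{1,N}(\Omega)\hookrightarrow L^{2N}(\Omega)$ (which costs an extra factor $|\Omega|^{1/2}$); and for $p>N$ argue directly, testing (\ref{weak+}) with $\varphi=u$, using $\int_{\Omega}u^{q-1}u\,\mathrm{d}x\leq\|u\|_{\infty}^{q}|\Omega|$ together with the Morrey inequality $\|u\|_{\infty}\leq c_{N,p}|\Omega|^{1/N-1/p}\|\nabla u\|_{p}$, and solving the resulting algebraic inequality for $\|u\|_{\infty}$. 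For $p\leq N$, after combining the above with Hölder's inequality ($\int_{A_{k}}(u-k)\,\mathrm{d}x\leq\|(u-k)^{+}\|_{pm}\,|A_{k}|^{1-1/(pm)}$, with $pm$ the Sobolev exponent, $m=m(N,p)>1$) and the trivial bound $(h-k)|A_{h}|\leq\int_{A_{k}}(u-k)\,\mathrm{d}x$ for $h>k$, one isolates $\|(u-k)^{+}\|_{pm}$ and reaches a recursive inequality
\[
|A_{h}|\leq\frac{\bigl(\kappa_{N,p}\,|\Omega|^{\gamma}\,\lambda\,k^{q-1}\bigr)^{1/(p-1)}}{h-k}\,|A_{k}|^{\beta},\qquad h>k>0 ,
\]
with $\beta=\beta(N,p)>1$ and $\gamma=\gamma(N,p)\geq0$ ($\gamma=0$ if $p<N$, $\gamma=1/2$ if $p=N$).

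To run the iteration, restrict to $k\geq k_{0}>0$, so $k^{q-1}\leq k_{0}^{q-1}$ and the coefficient becomes a genuine constant. Stampacchia's classical iteration lemma then yields $|A_{k_{0}+d}|=0$ with $d=(\kappa_{N,p}|\Omega|^{\gamma}\lambda k_{0}^{q-1})^{1/(p-1)}|A_{k_{0}}|^{\beta-1}2^{\beta/(\beta-1)}$, hence (bounding $|A_{k_{0}}|\leq|\Omega|$)
\[
\|u\|_{\infty}\leq k_{0}+b_{N,p}\,\lambda^{1/(p-1)}\,|\Omega|^{\delta}\,k_{0}^{-(1-q)/(p-1)}
\]
for suitable $b_{N,p},\delta$ depending only on $N,p$; a short computation shows $\delta=p/\bigl(N(p-1)\bigr)$ in both subcases. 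Finally I would minimize the right-hand side over $k_{0}>0$: the optimal $k_{0}$ is an explicit power of $\lambda^{1/(p-1)}|\Omega|^{\delta}$, and substituting it collapses the two powers into $\lambda^{1/(p-q)}|\Omega|^{p/(N(p-q))}$, leaving the prefactor $\tfrac{p-q}{1-q}\bigl(b_{N,p}\tfrac{1-q}{p-1}\bigr)^{(p-1)/(p-q)}$; taking its supremum over $q\in[0,1)$ produces the constant $K_{N,p}$.

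The only non-routine point — the main obstacle — is the bookkeeping of exponents. One must choose the auxiliary Sobolev exponent (especially in the borderline case $p=N$) so that $\delta=p/(N(p-1))$, which after the optimization is exactly what turns the power of $|\Omega|$ into the stated $p/(N(p-q))$; and one must check that the $q$-dependent prefactor obtained after optimizing over $k_{0}$ stays bounded on $[0,1)$: it is continuous there, tends to $p\,(b_{N,p}/(p-1))^{(p-1)/p}$ as $q\to0^{+}$ and to $b_{N,p}$ as $q\to1^{-}$, so its supremum is a finite constant depending only on $N$ and $p$ — the claimed $K_{N,p}$.
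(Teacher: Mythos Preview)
Your argument is correct and yields the stated estimate, but it diverges from the paper's after the common first step (testing with $(u-t)^+$ to get $\int_{E_t}|\nabla u|^p\le\lambda t^{q-1}\int_{E_t}(u-t)\,\mathrm{d}x$). The paper avoids any case distinction on $p$ versus $N$: it uses the Poincar\'e inequality $\int_D|v|^p\,\mathrm{d}x\le C_{N,p}^{-1}|D|^{p/N}\int_D|\nabla v|^p\,\mathrm{d}x$ on the super-level set $D=E_t$ (valid for every $p>1$), combines it with H\"older to bound $\int_{E_t}|\nabla u|^p\,\mathrm{d}x$ from below by a power of $f(t):=\int_{E_t}(u-t)\,\mathrm{d}x=\int_t^\infty|E_s|\,\mathrm{d}s$, and then integrates the resulting differential inequality $t^{(1-q)N/(p+N(p-1))}\le -C f(t)^{-N(p-1)/(p+N(p-1))}f'(t)$; at the end it replaces $f(0)=\|u\|_1$ by $\|u\|_\infty|\Omega|$ and solves algebraically for $\|u\|_\infty$. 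Your route instead runs a Stampacchia recursion on $|A_k|$ and then optimizes over the starting level $k_0$, at the cost of treating $p<N$, $p=N$, $p>N$ separately and tracking the domain-dependence of the borderline embedding. Both approaches land on the same exponents and both leave a $q$-dependent prefactor to be bounded uniformly over $q\in[0,1)$; the paper's is more economical because the Faber--Krahn/Poincar\'e inequality on $E_t$ already encodes the correct $|\Omega|$-scaling for every $p$, so no case split is needed.
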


\begin{proof}
For each $t>0,$ let
\[
E_{t}:=\left\{  x\in\Omega:u>t\right\}  \;\mathrm{and}\;\left(  u-t\right)
_{+}:=\max\left\{  u-t,0\right\}  \in W_{0}^{1,p}\left(  \Omega\right)  .
\]

Let us suppose $\left\vert E_{t}\right\vert >0.$ Since
\[
\int_{\Omega}\left\vert \nabla u\right\vert ^{p-2}\nabla u\cdot\nabla\left(
u-t\right)  _{+}\mathrm{d}x=\int_{E_{t}}\left\vert \nabla u\right\vert
^{p}\mathrm{d}x
\]
and%
\[
\int_{\Omega}u^{q-1}\left(  u-t\right)  _{+}\mathrm{d}x=\int_{E_{t}}%
u^{q-1}\left(  u-t\right)  \mathrm{d}x\leq t^{q-1}\int_{E_{t}}\left(
u-t\right)  \mathrm{d}x,
\]
(note that $q-1<0$) we obtain from (\ref{weak+}) that%
\begin{equation}
\int_{E_{t}}\left\vert \nabla u\right\vert ^{p}\mathrm{d}x\leq\lambda
t^{q-1}\int_{E_{t}}\left(  u-t\right)  \mathrm{d}x. \label{x1}%
\end{equation}

Now, we estimate $\int_{E_{t}}\left\vert \nabla u\right\vert ^{p}\mathrm{d}x$
from below. For this, we apply H\"{o}lder inequality and the estimate
(\ref{D}) with $D=E_{t}$ to obtain%
\[
\left(  \int_{E_{t}}\left(  u-t\right)  \mathrm{d}x\right)  ^{p}\leq
|E_{t}|^{p-1}\int_{E_{t}}\left(  u-t\right)  ^{p}\mathrm{d}x\leq\frac
{|E_{t}|^{p-1}|E_{t}|^{\frac{p}{N}}}{C_{N,p}}\int_{E_{t}}|\nabla
u|^{p}\,\mathrm{d}x.
\]
Hence,
\[
C_{N,p}|E_{t}|^{-\frac{p}{N}+1-p}\left(  \int_{E_{t}}\left(  u-t\right)
\mathrm{d}x\right)  ^{p}\leq\int_{E_{t}}\left\vert \nabla u\right\vert
^{p}\mathrm{d}x
\]
and, by taking into account (\ref{x1}), we get%
\[
C_{N,p}|E_{t}|^{-\frac{p}{N}+1-p}\left(  \int_{E_{t}}\left(  u-t\right)
\mathrm{d}x\right)  ^{p}\leq\lambda t^{q-1}\int_{E_{t}}\left(  u-t\right)
\mathrm{d}x,
\]
which is equivalent to%
\[
\left(  \int_{E_{t}}\left(  u-t\right)  \mathrm{d}x\right)  ^{p-1}\leq
\frac{\lambda}{C_{N,p}}t^{q-1}\left\vert E_{t}\right\vert ^{\frac{p+N(p-1)}%
{N}}.
\]

This latter inequality can be rewritten as%
\begin{equation}
\left(  \int_{E_{t}}\left(  u-t\right)  \mathrm{d}x\right)  ^{\frac
{N(p-1)}{p+N(p-1)}}\leq\left(  \frac{\lambda}{C_{N,p}}t^{q-1}\right)
^{\frac{N}{p+N(p-1)}}\left\vert E_{t}\right\vert . \label{x2}%
\end{equation}

Let us define
\[
f(t):=\int_{E_{t}}\left(  u-t\right)  \mathrm{d}x=\int_{t}^{\infty}\left\vert
E_{s}\right\vert \mathrm{d}s,
\]
where the second equality follows from Cavalieri's principle.

Since $f^{\prime}\left(  t\right)  =-\left\vert E_{t}\right\vert $ the
inequality in (\ref{x2}) can be rewritten as
\begin{equation}
t^{\frac{(1-q)N}{p+N(p-1)}}\leq-\left(  \frac{\lambda}{C_{N,p}}\right)
^{\frac{N}{p+N(p-1)}}f\left(  t\right)  ^{-\frac{N(p-1)}{p+N(p-1)}}f^{\prime
}\left(  t\right)  . \label{x3}%
\end{equation}

Integration of (\ref{x3}) yields%
\begin{align*}
\frac{p+N(p-1)}{p+N(p-q)}t^{\frac{p+N(p-q)}{p+N(p-1)}}  &  \leq\left(
\frac{\lambda}{C_{N,p}}\right)  ^{\frac{N}{p+N(p-1)}}\frac{p+N(p-1)}{p}\left[
f(0)^{\frac{p}{p+N(p-1)}}-f(t)^{\frac{p}{p+N(p-1)}}\right] \\
&  \leq\left(  \frac{\lambda}{C_{N,p}}\right)  ^{\frac{N}{p+N(p-1)}}%
\frac{p+N(p-1)}{p}\left(  \left\Vert u\right\Vert _{1}\right)  ^{\frac
{p}{p+N(p-1)}}<\infty.
\end{align*}
We have concluded that if $\left\vert E_{t}\right\vert >0$ then $t\leq K,$
where $K$ is a positive constant that does not depend on $t.$ Of course, this
implies that $\left\Vert u\right\Vert _{\infty}<\infty.$

Hence, we have%
\begin{align*}
\frac{p+N(p-1)}{p+N(p-q)}t^{\frac{p+N(p-q)}{p+N(p-1)}}  &  \leq\left(
\frac{\lambda}{C_{N,p}}\right)  ^{\frac{N}{p+N(p-1)}}\frac{p+N(p-1)}{p}\left(
\left\Vert u\right\Vert _{1}\right)  ^{\frac{p}{p+N(p-1)}}\\
&  \leq\left(  \frac{\lambda}{C_{N,p}}\right)  ^{\frac{N}{p+N(p-1)}}%
\frac{p+N(p-1)}{p}\left(  \left\Vert u\right\Vert _{\infty}\left\vert
\Omega\right\vert \right)  ^{\frac{p}{p+N(p-1)}}%
\end{align*}
and then, after making $t\rightarrow\left\Vert u\right\Vert _{\infty},$ we
obtain
\[
\left\Vert u\right\Vert _{\infty}\leq\left(  \frac{\lambda}{C_{N,p}}\right)
^{\frac{1}{p-q}}\left(  \frac{p+N(p-q)}{p}\right)  ^{\frac{p+N(p-1)}{N(p-q)}%
}\left\vert \Omega\right\vert ^{\frac{p}{N(p-q)}}%
\]
which leads to (\ref{x4b}) with%
\[
K_{N,p}:=\sup_{0\leq q\leq1}C_{N,p}^{-\frac{1}{p-q}}\left(  \frac{p+N(p-q)}%
{p}\right)  ^{\frac{p+N(p-1)}{N(p-q)}}.
\]

\end{proof}

In the next lemma, $\phi_{p}\in W_{0}^{1,p}(\Omega)$ denotes the $p$-torsion
function of $\Omega,$ that is, the weak solution of the $p$-torsional creep
problem%
\[
\left\{
\begin{array}
[c]{ll}%
-\Delta_{p}v=1 & \mathrm{in\ }\Omega,\\
v=0 & \mathrm{on\ }\partial\Omega.
\end{array}
\right.
\]
It is well known that the function $\phi_{p}$ is positive in $\Omega$ and
belongs to $C^{1,\alpha}(\overline{\Omega}),$ for some $\alpha\in(0,1).$

\begin{lemma}
\label{lowb}If $u\in W_{0}^{1,p}(\Omega)$ is a weak solution of
(\ref{singdiric}), then
\begin{equation}
0<\left(  K_{N,p}\left\vert \Omega\right\vert ^{\frac{p}{N(p-q)}}\right)
^{\frac{q-1}{p-1}}\lambda^{\frac{1}{p-q}}\phi_{p}(x)\leq u(x),\quad
\mathrm{for\ almost\ every}\;x\in\Omega. \label{a1}%
\end{equation}

\end{lemma}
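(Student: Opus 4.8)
The plan is to obtain the lower bound in \eqref{a1} by comparing $u$ with a suitable multiple of the $p$-torsion function $\phi_p$ via the weak comparison principle for the $p$-Laplacian. First I would use Lemma \ref{linfty}: since $u$ is a weak solution of \eqref{singdiric}, we have the pointwise bound $u(x)\leq \|u\|_\infty\leq K_{N,p}|\Omega|^{\frac{p}{N(p-q)}}\lambda^{\frac{1}{p-q}}=:M$ a.e.\ in $\Omega$. Because $q-1<0$, this gives $u^{q-1}\geq M^{q-1}$ a.e.\ in $\Omega$, so that, testing \eqref{weak+} against nonnegative $\varphi$,
\[
\int_\Omega |\nabla u|^{p-2}\nabla u\cdot\nabla\varphi\,\mathrm{d}x=\lambda\int_\Omega u^{q-1}\varphi\,\mathrm{d}x\geq \lambda M^{q-1}\int_\Omega \varphi\,\mathrm{d}x,\qquad \varphi\in W_0^{1,p}(\Omega),\ \varphi\geq 0 .
\]
In other words, $u$ is a weak supersolution of $-\Delta_p v=\lambda M^{q-1}$ in $\Omega$ with $v=0$ on $\partial\Omega$.

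Next I would identify the explicit subsolution. By homogeneity of the $p$-Laplacian, $-\Delta_p(c\,\phi_p)=c^{p-1}(-\Delta_p\phi_p)=c^{p-1}$ for any constant $c>0$, so choosing $c=(\lambda M^{q-1})^{\frac{1}{p-1}}$ makes $c\,\phi_p$ a weak solution — hence in particular a subsolution — of exactly the same Dirichlet problem $-\Delta_p v=\lambda M^{q-1}$, $v=0$ on $\partial\Omega$. Then I would invoke the weak comparison principle for the $p$-Laplacian (using $\varphi=(c\phi_p-u)_+\in W_0^{1,p}(\Omega)$ as a test function in the two weak formulations and \eqref{vectors}) to conclude $c\,\phi_p\leq u$ a.e.\ in $\Omega$. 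Substituting $c=(\lambda M^{q-1})^{\frac{1}{p-1}}$ and $M=K_{N,p}|\Omega|^{\frac{p}{N(p-q)}}\lambda^{\frac{1}{p-q}}$, a short computation of exponents gives
\[
c=\lambda^{\frac{1}{p-1}}\Big(K_{N,p}|\Omega|^{\frac{p}{N(p-q)}}\Big)^{\frac{q-1}{p-1}}\lambda^{\frac{q-1}{(p-q)(p-1)}}=\Big(K_{N,p}|\Omega|^{\frac{p}{N(p-q)}}\Big)^{\frac{q-1}{p-1}}\lambda^{\frac{1}{p-q}},
\]
since $\tfrac{1}{p-1}+\tfrac{q-1}{(p-q)(p-1)}=\tfrac{(p-q)+(q-1)}{(p-q)(p-1)}=\tfrac{1}{p-q}$, which is precisely the constant in \eqref{a1}. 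Positivity of the left-hand side follows because $\phi_p>0$ in $\Omega$ and $c>0$.

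The main obstacle is making the comparison step fully rigorous despite the singular right-hand side: the term $u^{q-1}$ blows up where $u\to 0$, i.e.\ near $\partial\Omega$, so one must be careful that all integrals in the weak formulations converge when testing with $(c\phi_p-u)_+$. Here the definition of weak solution helps — $\operatorname{essinf}_K u>0$ on every compact $K\subset\Omega$ — and the cleanest route is to first prove $c\phi_p\leq u$ on compactly contained subdomains (where everything is bounded and the standard comparison principle applies verbatim) and then pass to the limit, or alternatively to note that the inequality $\int_\Omega |\nabla u|^{p-2}\nabla u\cdot\nabla\varphi\geq \lambda M^{q-1}\int_\Omega\varphi$ already holds for all nonnegative $\varphi\in W_0^{1,p}(\Omega)$ without any singularity issue (the right side is integrable since $\varphi\in L^1$), so the comparison argument against the non-singular problem $-\Delta_p v=\lambda M^{q-1}$ goes through directly with $\varphi=(c\phi_p-u)_+$.
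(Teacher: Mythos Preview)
Your proposal is correct and follows essentially the same approach as the paper: bound $u^{q-1}$ from below using the $L^\infty$ estimate from Lemma~\ref{linfty} (exploiting $q-1<0$), then compare $u$ with the multiple $c\phi_p$ of the $p$-torsion function via the weak comparison principle. The only cosmetic difference is that the paper first sets $c=(\lambda\|u\|_\infty^{q-1})^{1/(p-1)}$, applies comparison, and then bounds $c$ below using $\|u\|_\infty\leq M$, whereas you substitute $M$ at the outset; your added discussion of why the singular term causes no trouble in the comparison step is a welcome clarification that the paper leaves implicit.
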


\begin{proof}
Let $0\leq\varphi\in W_{0}^{1,p}(\Omega)$ and $c:=\left(  \lambda\left\Vert
u\right\Vert _{\infty}^{q-1}\right)  ^{\frac{1}{p-1}}.$ Since
\begin{align*}
\int_{\Omega}\left\vert \nabla u\right\vert ^{p-2}\nabla u\cdot\nabla
\varphi\mathrm{d}x  &  =\lambda\int_{\Omega}u^{q-1}\varphi\mathrm{d}x\\
&  \geq\lambda\left\Vert u\right\Vert _{\infty}^{q-1}\int_{\Omega}%
\varphi\mathrm{d}x\\
&  =\int_{\Omega}c^{p-1}\varphi\mathrm{d}x=\int_{\Omega}\left\vert
\nabla(c\phi_{p})\right\vert ^{p-2}\nabla(c\phi_{p})\cdot\nabla\varphi
\mathrm{d}x,
\end{align*}
the weak comparison principle guarantees that
\[
c\phi_{p}(x)\leq u(x),\quad\mathrm{for\ almost\ every}\;x\in\Omega.
\]
This leads to (\ref{a1}) since (\ref{x4b}) implies that
\[
c\geq\lambda^{\frac{1}{p-1}}(K_{N,p}\left\vert \Omega\right\vert ^{\frac
{p}{N(p-q)}}\lambda^{\frac{1}{p-q}})^{\frac{q-1}{p-1}}=\left(  K_{N,p}%
\left\vert \Omega\right\vert ^{\frac{p}{N(p-q)}}\right)  ^{\frac{q-1}{p-1}%
}\lambda^{\frac{1}{p-q}},\quad\mathrm{for\ almost\ every}\;x\in\Omega.
\]

\end{proof}

\begin{remark}
\label{regu}It follows from Lemma \ref{linfty} and Lemma \ref{lowb} that if
$u\in W_{0}^{1,p}(\Omega)$ is a weak solution of (\ref{singdiric}) then
\[
0<A\lambda^{\frac{p-1}{p-q}}\leq\lambda u(x)^{q-1}\leq B\lambda^{\frac
{p-1}{p-q}}\phi_{p}(x)^{q-1},\quad\mathrm{for\ almost\ every}\;x\in\Omega,
\]
where $A$ and $B$ are positive constants that depend only on $N,p$ and
$\left\vert \Omega\right\vert .$ This fact implies that if $\Omega^{\prime}$
is a subdomain of $\Omega$ such that $\overline{\Omega^{\prime}}\subset
\Omega,$ then $\lambda u^{q-1}$ is bounded in $\overline{\Omega^{\prime}}.$
\end{remark}

\section{The main results\label{S2}}

One can check, as a simple application of the H\"{o}lder inequality, that for
each $\ v\in L^{1}(\Omega)$ the function $q\in(0,1]\mapsto\left(  \frac
{1}{\left\vert \Omega\right\vert }\int_{\Omega}\left\vert v\right\vert
^{q}\mathrm{d}x\right)  ^{\frac{1}{q}}$ is increasing. This fact has two
immediate consequences: it implies that%
\[
0\leq\lim_{q\rightarrow0^{+}}\left(  \frac{1}{\left\vert \Omega\right\vert
}\int_{\Omega}\left\vert v\right\vert ^{q}\mathrm{d}x\right)  ^{\frac{1}{q}%
}=\inf_{0<s\leq1}\left(  \frac{1}{\left\vert \Omega\right\vert }\int_{\Omega
}\left\vert v\right\vert ^{s}\mathrm{d}x\right)  ^{\frac{1}{s}}\leq
\frac{\left\Vert v\right\Vert _{1}}{\left\vert \Omega\right\vert },\quad v\in
L^{1}(\Omega)
\]
and also that the function $q\in(0,1]\mapsto\lambda_{q}(\Omega)\left\vert
\Omega\right\vert ^{\frac{p}{q}}$ is decreasing, so that we can define%
\[
\mu(\Omega):=\lim_{q\rightarrow0^{+}}\lambda_{q}(\Omega)\left\vert
\Omega\right\vert ^{\frac{p}{q}}=\sup_{0<s\leq1}\lambda_{s}(\Omega)\left\vert
\Omega\right\vert ^{\frac{p}{s}}.
\]
Of course,
\begin{equation}
0<\lambda_{1}(\Omega)\left\vert \Omega\right\vert ^{p}\leq\mu(\Omega
)\leq\infty. \label{lowbmu}%
\end{equation}

Our first goal in this section is to show that $\mu(\Omega)<\infty.$

\begin{lemma}
One has%
\[
\lim_{q\rightarrow0^{+}}\left(  \int_{0}^{1}(1-t^{\frac{1}{q}})^{N}%
\mathrm{d}t\right)  ^{\frac{1}{q}}=e^{-1-\frac{1}{2}-\frac{1}{3}-\cdots
-\frac{1}{N}},\quad N\geq2.
\]

\end{lemma}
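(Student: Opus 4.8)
The plan is to compute the limit by taking logarithms and identifying the leading-order behaviour of the integral $I(q):=\int_0^1(1-t^{1/q})^N\,\mathrm dt$ as $q\to0^+$. First I would substitute $s=t^{1/q}$, i.e. $t=s^q$ and $\mathrm dt=q\,s^{q-1}\mathrm ds$, which turns the integral into $I(q)=q\int_0^1 s^{q-1}(1-s)^N\,\mathrm ds=q\,B(q,N+1)$, a Beta function. Using $B(q,N+1)=\Gamma(q)\Gamma(N+1)/\Gamma(q+N+1)$ and $q\Gamma(q)=\Gamma(q+1)$, we get $I(q)=\Gamma(q+1)\Gamma(N+1)/\Gamma(q+N+1)$. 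Hence $I(q)^{1/q}=\exp\!\left(\tfrac1q\log\frac{\Gamma(q+1)\Gamma(N+1)}{\Gamma(q+N+1)}\right)$, and since $\Gamma(1)=1$ and $\Gamma(N+1)=\Gamma(N+1)$ the argument of the exponential tends to $\frac{\mathrm d}{\mathrm dq}\big[\log\Gamma(q+1)-\log\Gamma(q+N+1)\big]_{q=0}=\psi(1)-\psi(N+1)$, where $\psi=\Gamma'/\Gamma$ is the digamma function.

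Then I would invoke the standard identity $\psi(N+1)-\psi(1)=\sum_{k=1}^{N}\frac1k$ (which follows immediately from $\psi(x+1)=\psi(x)+\tfrac1x$ by telescoping from $x=1$ to $x=N$). This gives $\lim_{q\to0^+}I(q)^{1/q}=\exp\!\big(-\sum_{k=1}^N\tfrac1k\big)=e^{-1-\frac12-\cdots-\frac1N}$, which is exactly the claimed value.

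An alternative, more self-contained route avoiding the Beta/Gamma machinery: write $\tfrac1q\log I(q)=\tfrac{\log I(q)-\log I(0^+)}{q}$ after checking $I(q)\to I(0)$; but $I(0^+)$ is not literally $I(0)$, so more cleanly I would note $I(q)=\int_0^1\exp\!\big(N\log(1-t^{1/q})\big)\mathrm dt$, split $[0,1]$ near $t=1$, and estimate directly — though this is messier. The Beta-function computation is cleaner, so I would present that and relegate the digamma identity to a one-line remark. The main (minor) obstacle is justifying the interchange of limit and the $1/q$-power, i.e. the continuity of $q\mapsto\log I(q)$ at $q=0$ together with differentiability: since $\Gamma$ is smooth and positive near the relevant points, $q\mapsto\log\frac{\Gamma(q+1)\Gamma(N+1)}{\Gamma(q+N+1)}$ is $C^1$ on a neighbourhood of $0$ and vanishes at $q=0$, so its difference quotient at $0$ converges to the derivative there, and composing with the continuous function $\exp$ finishes the argument.
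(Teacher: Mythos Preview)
Your proof is correct, but it takes a genuinely different route from the paper. You make the substitution $s=t^{1/q}$ at the outset, which converts $I(q)$ into the closed form $q\,B(q,N+1)=\Gamma(q+1)\Gamma(N+1)/\Gamma(q+N+1)$; the limit then reduces to computing a derivative of $\log\Gamma$ at integer points, and the harmonic number drops out of the digamma recursion $\psi(x+1)=\psi(x)+1/x$. The paper instead keeps the original integral, applies L'H\^{o}pital's rule to $\log I(q)/q$ (differentiating under the integral with respect to $s=1/q$), and only then changes variable to obtain $L=N\int_0^1(1-\tau)^{N-1}\log\tau\,\mathrm d\tau$; this last integral is evaluated by deriving the recursion $I(N+1)=I(N)-\tfrac{1}{N+1}$ and checking the base case $I(2)=-1-\tfrac12$. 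Your approach is shorter and more systematic once one is willing to invoke the Beta--Gamma relation and the digamma functional equation; the paper's approach is more self-contained, needing only calculus and a one-line induction, at the cost of a slightly longer computation and an (unremarked) justification of differentiation under the integral sign.
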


\begin{proof}
We have%
\[
\lim_{q\rightarrow0^{+}}\left(  \int_{0}^{1}(1-t^{\frac{1}{q}})^{N}%
\mathrm{d}t\right)  ^{\frac{1}{q}}=\lim_{s\rightarrow\infty}\left(  \int
_{0}^{1}(1-t^{s})^{N}\mathrm{d}t\right)  ^{s}=e^{L}.
\]
where%
\begin{align*}
L  &  :=\lim_{s\rightarrow\infty}\frac{\ln\left(  \int_{0}^{1}(1-t^{s}%
)^{N}\mathrm{d}t\right)  }{s^{-1}}\\
&  =\lim_{s\rightarrow\infty}\frac{s^{2}N\int_{0}^{1}(1-t^{s})^{N-1}t^{s}\ln
t\mathrm{d}t}{\int_{0}^{1}(1-t^{s})^{N}\mathrm{d}t}=\lim_{s\rightarrow\infty
}s^{2}N\int_{0}^{1}(1-t^{s})^{N-1}t^{s}\ln t\mathrm{d}t.
\end{align*}

After making the change of variable $\tau=t^{s}$ in the latter integral, we
obtain%
\[
L=N\lim_{s\rightarrow\infty}\int_{0}^{1}(1-\tau)^{N-1}\tau^{\frac{1}{s}}%
\ln\tau\mathrm{d}\tau=N\int_{0}^{1}(1-\tau)^{N-1}\ln\tau\mathrm{d}\tau.
\]

In order to finish the proof, it is enough to verify that%
\begin{equation}
N\int_{0}^{1}(1-\tau)^{N-1}\ln\tau\mathrm{d}\tau=-1-\frac{1}{2}-\frac{1}%
{3}-\cdots-\frac{1}{N},\quad N\geq2. \label{IN}%
\end{equation}
For this, let $I(N):=N\int_{0}^{1}(1-\tau)^{N-1}\ln\tau\mathrm{d}\tau.$ After
some simple calculations one can show that
\begin{equation}
I(N+1)=I(N)-\frac{1}{N+1},\quad N\geq2. \label{INa}%
\end{equation}

It is easy to check that $I(2)=-1-\dfrac{1}{2}.$ Hence, by using the recursive
formula (\ref{INa}), we arrive at (\ref{IN}).
\end{proof}

\begin{lemma}
\label{P3}Suppose that $\Omega$ is a bounded domain, star-shaped with respect
to $x_{0}\in\mathbb{R}^{N}.$ There exists $\rho\in C(\overline{\Omega})$ such
that: $0<\rho\leq1$ in $\Omega,$ $\rho(x_{0})=1,$ $\rho=0$ on $\partial\Omega$
and%
\[
\lim_{q\rightarrow0^{+}}\left(  \frac{1}{\left\vert \Omega\right\vert }%
\int_{\Omega}\left\vert \rho\right\vert ^{q}\mathrm{d}x\right)  ^{\frac{1}{q}%
}=e^{-1-\frac{1}{2}-\frac{1}{3}-\cdots-\frac{1}{N}}.
\]

In particular, any function $v\in W_{0}^{1,p}(\Omega)$ such that $v\geq
\rho\quad\mathrm{in\ }\Omega$ satisfies
\[
\lim_{q\rightarrow0^{+}}\left(  \frac{1}{\left\vert \Omega\right\vert }%
\int_{\Omega}\left\vert v\right\vert ^{q}\mathrm{d}x\right)  ^{\frac{1}{q}%
}\geq e^{-1-\frac{1}{2}-\frac{1}{3}-\cdots-\frac{1}{N}}>0.
\]

\end{lemma}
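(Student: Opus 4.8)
The plan is to construct $\rho$ explicitly from the geometry of a star-shaped domain and then reduce the limit to the one-dimensional integral computed in the previous lemma. Assume first that $x_0 = 0$, so $\Omega$ is star-shaped with respect to the origin. Since $\Omega$ is bounded and smooth, for each direction $\theta \in S^{N-1}$ there is a unique $R(\theta) > 0$ with $R(\theta)\theta \in \partial\Omega$, and the map $\theta \mapsto R(\theta)$ is continuous and bounded away from $0$ and $\infty$; every point $x \in \overline{\Omega}\setminus\{0\}$ is written uniquely as $x = r\theta$ with $0 \le r \le R(\theta)$. I would then define
\[
\rho(x) := 1 - \frac{r}{R(\theta)} = 1 - \frac{|x|}{R(x/|x|)}, \qquad x = r\theta,
\]
and $\rho(0) := 1$. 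This $\rho$ is continuous on $\overline{\Omega}$ (continuity at $0$ follows from $R$ being bounded below), satisfies $0 < \rho \le 1$ in $\Omega$, $\rho(0) = 1$, and $\rho = 0$ on $\partial\Omega$, as required.

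Next I would compute $\frac{1}{|\Omega|}\int_\Omega \rho^q\,\mathrm{d}x$ in polar coordinates. Writing $\mathrm{d}x = r^{N-1}\,\mathrm{d}r\,\mathrm{d}\sigma(\theta)$ and substituting $r = R(\theta)t$ gives
\[
\int_\Omega \rho^q\,\mathrm{d}x = \int_{S^{N-1}} R(\theta)^N \left(\int_0^1 (1-t)^q t^{N-1}\,\mathrm{d}t\right)\mathrm{d}\sigma(\theta) = \left(\int_0^1 (1-t)^q t^{N-1}\,\mathrm{d}t\right)\!\!\int_{S^{N-1}}\!\! R(\theta)^N\,\mathrm{d}\sigma(\theta),
\]
and the second factor equals $N|\Omega|$ (since $|\Omega| = \frac{1}{N}\int_{S^{N-1}} R(\theta)^N\,\mathrm{d}\sigma$). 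Hence
\[
\frac{1}{|\Omega|}\int_\Omega \rho^q\,\mathrm{d}x = N\int_0^1 (1-t)^q t^{N-1}\,\mathrm{d}t.
\]
After the substitution $t \mapsto t^{1/q}$ — or more directly, by symmetry of the Beta function — one checks that $N\int_0^1 (1-t)^q t^{N-1}\,\mathrm{d}t = \int_0^1 (1 - s^{1/q})^N\,\mathrm{d}s$ (integrate by parts, or substitute $s = (1-t)\cdot(\text{something})$; the cleanest route is $\int_0^1 (1-s^{1/q})^N\,\mathrm{d}s$ with $s = t^{1/q}$ giving $\frac1q\int_0^1(1-t)^N t^{1/q - 1}\mathrm{d}t = \frac1q B(1/q, N+1)$, and $N\int_0^1(1-t)^q t^{N-1}\mathrm{d}t = N B(N, q+1)$; using $B(a,b) = \Gamma(a)\Gamma(b)/\Gamma(a+b)$ both equal $N!\,\Gamma(1/q)/\Gamma(N+1/q) \cdot (1/q)^{-1}$-type expressions that coincide). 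Thus
\[
\left(\frac{1}{|\Omega|}\int_\Omega \rho^q\,\mathrm{d}x\right)^{1/q} = \left(\int_0^1 (1 - s^{1/q})^N\,\mathrm{d}s\right)^{1/q} \longrightarrow e^{-1-\frac12-\cdots-\frac1N}
\]
as $q \to 0^+$, by the preceding lemma. For the case $x_0 \ne x_0 = 0$, translate; nothing changes. For general $v \in W_0^{1,p}(\Omega)$ with $v \ge \rho$ in $\Omega$, monotonicity of the integral gives $\frac{1}{|\Omega|}\int_\Omega |v|^q \ge \frac{1}{|\Omega|}\int_\Omega \rho^q$, and since $t \mapsto t^{1/q}$ is increasing the claimed lower bound on the limit follows, positivity being clear since the right-hand side is a positive number.

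The only genuine subtlety is the verification of the Beta-function identity $N\int_0^1 (1-t)^q t^{N-1}\,\mathrm{d}t = \int_0^1 (1 - s^{1/q})^N\,\mathrm{d}s$, which converts the polar-coordinate computation into exactly the form of the prior lemma; this is a routine but slightly fiddly manipulation, most transparently done by expressing both sides via $\Gamma$-functions. Everything else — the construction of $\rho$, its continuity and boundary behavior, the polar-coordinate change of variables, and the final monotonicity argument — is straightforward. I should also note that $R(\theta)$ need not be smooth if $\Omega$ is merely star-shaped, but continuity (which holds for bounded star-shaped domains with, say, Lipschitz or smooth boundary) is all that is needed for $\rho \in C(\overline{\Omega})$ and for the integral manipulations to be valid.
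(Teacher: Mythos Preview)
Your construction of $\rho$ is exactly the paper's (the paper writes $r(x)$ for the scalar with $r(x)x\in\partial\Omega$, so $1-1/r(x)=1-|x|/R(x/|x|)$), and the final monotonicity argument is the same. The one genuine difference is how the integral $\frac{1}{|\Omega|}\int_\Omega\rho^q\,\mathrm{d}x$ is evaluated. The paper uses Cavalieri's principle: it observes that $\{\rho>t\}=(1-t)\Omega$ by the homogeneity $r(\alpha x)=r(x)/\alpha$, so $|\{\rho>t\}|=(1-t)^N|\Omega|$ and hence
\[
\int_\Omega\rho^q\,\mathrm{d}x=\int_0^1\bigl|\{\rho>t^{1/q}\}\bigr|\,\mathrm{d}t=|\Omega|\int_0^1(1-t^{1/q})^N\,\mathrm{d}t,
\]
landing directly on the form of the previous lemma with no Beta-function detour. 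Your polar-coordinate route is equally valid and the Beta identity you need is correct (both sides equal $\Gamma(q+1)\Gamma(N+1)/\Gamma(N+q+1)$, via $q\Gamma(q)=\Gamma(q+1)$ and $N\Gamma(N)=\Gamma(N+1)$), though your in-line justification of it is a little garbled and should be cleaned up. The trade-off: the paper's layer-cake argument is slicker and avoids special functions entirely, while your approach is perhaps more hands-on but costs an extra identity; either way the result follows.
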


\begin{proof}
We will assume in this proof, without loss of generality, that $x_{0}=0.$

For each $0\not =x\in\overline{\Omega},$ let $r(x)$ be the unique positive
number such that
\[
r(x)x\in\partial\Omega.
\]
Of course, $r(x)\geq1$ and $r(x)\rightarrow\infty$ as $x\rightarrow0.$
Moreover, if $x\in\Omega$ and $\alpha>0$ is such that $\alpha x\in\Omega,$
then $r(\alpha x)\alpha x=r(x)x,$ so that%
\[
r(\alpha x)=\frac{r(x)}{\alpha}.
\]

Let us define $\rho:\overline{\Omega}\mapsto\lbrack0,1]$ by%
\[
\rho(x):=\left\{
\begin{array}
[c]{ll}%
1-\dfrac{1}{r(x)} & \mathrm{if}\quad x\in\overline{\Omega}\quad,\quad
x\not =0.\\
1 & \mathrm{if}\quad x=0.
\end{array}
\right.
\]
The graph of $\rho$ in $\mathbb{R}^{N}\times\mathbb{R}$ is the cone of base
$\Omega,$ height $1$ and vertex at the point $(0,1)\in\mathbb{R}^{N}%
\times\mathbb{R}.$

For each $t\in\lbrack0,1)$ the change of variable $x=(1-t)y$ yields
\begin{equation}
\left\vert \left\{  \rho(x)>t\right\}  \right\vert =\int_{\left\{
\rho(x)>t\right\}  }\mathrm{d}x=\int_{\left\{  \rho(y)>0\right\}  }%
(1-t)^{N}\mathrm{d}y=(1-t)^{N}\left\vert \Omega\right\vert . \label{cone1}%
\end{equation}
Indeed, by taking $\alpha=(1-t)$ one has%
\[
\rho(\alpha y)=1-\frac{1}{r(\alpha y)}=1-\frac{\alpha}{r(y)}=1-\alpha
+\alpha\left(  1-\frac{1}{r(y)}\right)  =1-\alpha+\alpha\rho(y).
\]
It follows that
\[
t<\rho((1-t)y)=t+(1-t)\rho(y)\Longleftrightarrow0<\rho(y).
\]

Thus, (\ref{cone1}) and Cavalieri's principle yield
\[
\int_{\Omega}\rho^{q}\mathrm{d}x=\int_{0}^{1}\left\vert \left\{  \rho
(x)^{q}>t\right\}  \right\vert \mathrm{d}t=\int_{0}^{1}\left\vert \left\{
\rho(x)>t^{\frac{1}{q}}\right\}  \right\vert \mathrm{d}t=\int_{0}%
^{1}(1-t^{\frac{1}{q}})^{N}\left\vert \Omega\right\vert \mathrm{d}t,
\]
so that%
\[
\lim_{q\rightarrow0^{+}}\left(  \frac{1}{\left\vert \Omega\right\vert }%
\int_{\Omega}\rho^{q}\mathrm{d}x\right)  ^{\frac{1}{q}}=\lim_{q\rightarrow
0^{+}}\left(  \int_{0}^{1}(1-t^{\frac{1}{q}})^{N}\mathrm{d}t\right)
^{\frac{1}{q}}=e^{-1-\frac{1}{2}-\frac{1}{3}-\cdots-\frac{1}{N}}.
\]

\end{proof}

\begin{remark}
If $\Omega=B_{R}$ is the ball centered at the origin with radius $R,$ then
$\rho(x)=1-\dfrac{\left\vert x\right\vert }{R}$ and%
\[
\frac{1}{\left\vert B_{R}\right\vert }\int_{B_{R}}\rho^{q}\mathrm{d}x=\int
_{0}^{1}(1-t^{\frac{1}{q}})^{N}\mathrm{d}t.
\]

\end{remark}

In the proof of the following theorem we will write $\Omega$ as a finite union
of star-shaped subdomains. This decomposition is quite general in the sense
that it is valid for bounded domains with low regularity as, for instance,
those with Lipschitz boundary (see \cite[Lemma II.1.3]{Galdi}).

\begin{theorem}
\label{main1}There exists $v\in W_{0}^{1,p}(\Omega)$ such that $v>0$ in
$\Omega$ and
\[
\lim_{q\rightarrow0^{+}}\left(  \frac{1}{\left\vert \Omega\right\vert }%
\int_{\Omega}\left\vert v\right\vert ^{q}\mathrm{d}x\right)  ^{\frac{1}{q}%
}=1.
\]

Moreover,%
\begin{equation}
\mu(\Omega)\leq\left\Vert \nabla v\right\Vert _{p}^{p}. \label{mufin}%
\end{equation}

\end{theorem}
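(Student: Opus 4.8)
The plan is to build the competitor $v$ by gluing together the cone functions provided by Lemma \ref{P3}. Since $\Omega$ is bounded and smooth, we may write $\overline{\Omega}=\bigcup_{i=1}^{m}\overline{\Omega_{i}}$, where each $\Omega_{i}$ is a bounded subdomain that is star-shaped with respect to some point $x_{i}$, and the $\Omega_{i}$ have pairwise disjoint interiors (this is the decomposition referenced just before the theorem, valid even for Lipschitz domains by \cite[Lemma II.1.3]{Galdi}). For each $i$, Lemma \ref{P3} furnishes a function $\rho_{i}\in C(\overline{\Omega_{i}})$ with $0<\rho_{i}\le 1$ in $\Omega_{i}$, $\rho_{i}=0$ on $\partial\Omega_{i}$, and
\[
\lim_{q\rightarrow0^{+}}\left(\frac{1}{|\Omega_{i}|}\int_{\Omega_{i}}\rho_{i}^{q}\mathrm{d}x\right)^{\frac{1}{q}}=e^{-1-\frac12-\cdots-\frac{1}{N}}=:\kappa_{N}\in(0,1).
\]
Define $w\in W_{0}^{1,p}(\Omega)$ by $w=\rho_{i}$ on $\Omega_{i}$; this is continuous on $\overline{\Omega}$, vanishes on $\partial\Omega$, and is piecewise smooth, hence lies in $W_{0}^{1,p}(\Omega)$ and is positive in the interior of each piece. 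The issue that $w$ may vanish on the internal interfaces $\partial\Omega_{i}\cap\Omega$ is harmless for the integral quantity, since those interfaces have measure zero; but to get $v>0$ everywhere in $\Omega$ one can instead work with $v:=w+\varepsilon\phi_{p}$, where $\phi_{p}$ is the $p$-torsion function (positive in $\Omega$, in $C^{1,\alpha}(\overline{\Omega})$, vanishing on $\partial\Omega$), for a small $\varepsilon>0$ to be chosen; this $v$ is strictly positive in $\Omega$ and still in $W_{0}^{1,p}(\Omega)$.

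Next I would compute the limit of the normalized $L^{q}$-average of $v$. Because $v\ge w\ge\rho_{i}\ge 0$, monotonicity in $q$ and the additivity $\int_{\Omega}v^{q}=\sum_{i}\int_{\Omega_{i}}v^{q}$ give, after dividing by $|\Omega|$ and taking $q$-th roots, that
\[
\liminf_{q\to0^{+}}\left(\frac{1}{|\Omega|}\int_{\Omega}v^{q}\mathrm{d}x\right)^{\frac1q}\ge \liminf_{q\to0^{+}}\left(\frac{|\Omega_{1}|}{|\Omega|}\cdot\frac{1}{|\Omega_{1}|}\int_{\Omega_{1}}\rho_{1}^{q}\mathrm{d}x\right)^{\frac1q}=\kappa_{N}>0,
\]
since $(|\Omega_{1}|/|\Omega|)^{1/q}\to$ (a positive constant)$^{\infty}$ — wait, this needs care: if $|\Omega_{1}|<|\Omega|$ that prefactor kills the bound. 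So the correct argument is the one already used in the ``In particular'' clause of Lemma \ref{P3}: since $v\ge \rho_{i}$ on all of $\Omega_{i}$ and the $\Omega_{i}$ cover $\Omega$, on each $\Omega_{i}$ we have $\int_{\Omega}v^{q}\ge \int_{\Omega_{i}}\rho_{i}^{q}$, but to exploit the full cover we rather note that the cone function $\rho$ of Lemma \ref{P3} can be taken relative to a single star-shaped component containing a ball, and then extended; the clean route is to pick one star-shaped subdomain $\Omega_{1}$ with a chosen vertex, use its cone $\rho_{1}$ extended by $0$, and bound from below using $v\ge c\,\rho_{1}$ on $\Omega_{1}$ for suitable $c$ — but more simply, since the limit of the normalized average is $\inf_{0<s\le1}(\cdots)^{1/s}$ and $v^{q}\ge\rho_{1}^{q}\mathbf{1}_{\Omega_{1}}$, one gets the strictly positive lower bound directly provided $\Omega$ itself is star-shaped. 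For general $\Omega$ the right device is to apply Lemma \ref{P3} with the cone built on each piece and use $w\ge\rho_{i}$ on $\Omega_i$ together with the elementary fact that for nonnegative $g$, $\bigl(\frac{1}{|\Omega|}\int_\Omega g^q\bigr)^{1/q}\to\exp\bigl(\frac{1}{|\Omega|}\int_\Omega\log g\bigr)$, so the limit is $\exp\bigl(\frac{1}{|\Omega|}\sum_i\int_{\Omega_i}\log\rho_i\bigr)$, a finite positive number $\beta$.

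Having produced a $v$ with $0<\beta:=\lim_{q\to0^{+}}\bigl(\frac{1}{|\Omega|}\int_{\Omega}|v|^{q}\bigr)^{1/q}<\infty$, I would simply rescale: set $\tilde v:=\beta^{-1}v$. Since for every $q$ the map $c\mapsto\bigl(\frac{1}{|\Omega|}\int_{\Omega}|cv|^{q}\bigr)^{1/q}=|c|\bigl(\frac{1}{|\Omega|}\int_{\Omega}|v|^{q}\bigr)^{1/q}$ is exactly homogeneous of degree one, passing to the limit gives $\lim_{q\to0^{+}}\bigl(\frac{1}{|\Omega|}\int_{\Omega}|\tilde v|^{q}\bigr)^{1/q}=\beta^{-1}\beta=1$, and $\tilde v>0$ in $\Omega$. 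Finally, $\tilde v$ is an admissible competitor in the variational characterization defining $\mu(\Omega)$ in \eqref{defmu}; comparing with the supremum formula $\mu(\Omega)=\sup_{0<s\le1}\lambda_{s}(\Omega)|\Omega|^{p/s}$, for each fixed $s$ the function $\tilde v$ satisfies $\bigl(\int_{\Omega}|\tilde v|^{s}\bigr)^{p/s}\ge|\Omega|^{p/s}\cdot 1^{p}=|\Omega|^{p/s}$ — no: here I need the other direction. Since the normalized average is increasing in $q$, its limit $1$ is $\le\bigl(\frac{1}{|\Omega|}\int_{\Omega}|\tilde v|^{s}\bigr)^{1/s}$ for every $s\in(0,1]$, i.e. $\int_{\Omega}|\tilde v|^{s}\ge|\Omega|$, hence $\lambda_{s}(\Omega)\le\|\nabla\tilde v\|_{p}^{p}/\bigl(\int_{\Omega}|\tilde v|^{s}\bigr)^{p/s}\le\|\nabla\tilde v\|_{p}^{p}|\Omega|^{-p/s}$, so $\lambda_{s}(\Omega)|\Omega|^{p/s}\le\|\nabla\tilde v\|_{p}^{p}$ for all $s$, and taking the sup over $s$ yields $\mu(\Omega)\le\|\nabla\tilde v\|_{p}^{p}$, which is \eqref{mufin}. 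The main obstacle is the middle step — rigorously handling the decomposition of a general smooth $\Omega$ into star-shaped pieces and justifying that the $q\to0^{+}$ limit of the global normalized $L^{q}$-average equals $\exp\bigl(\frac{1}{|\Omega|}\int_{\Omega}\log v\bigr)$ with the glued function $v$ staying in $W_{0}^{1,p}(\Omega)$ and strictly positive; the rescaling and the comparison with $\mu(\Omega)$ are then immediate.
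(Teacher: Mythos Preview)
Your overall strategy matches the paper's: cover $\Omega$ by finitely many star-shaped subdomains, invoke Lemma~\ref{P3} on each piece, sum the resulting functions into a single $V\in W_0^{1,p}(\Omega)$, show that $\theta:=\lim_{q\to0^+}\bigl(|\Omega|^{-1}\int_\Omega V^q\bigr)^{1/q}>0$, and then rescale $v:=\theta^{-1}V$. Your concluding inequality $\mu(\Omega)\le\|\nabla v\|_p^p$, obtained from the monotonicity in $q$ of the normalized $L^q$-average, is exactly the paper's closing step.

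Two points diverge. First, you assume the star-shaped pieces have \emph{pairwise disjoint interiors}; the cited decomposition from \cite{Galdi} only furnishes a (possibly overlapping) cover, so this disjointness is an extra hypothesis you have not justified. The paper works directly with an overlapping cover $\Omega=\bigcup_j\Omega_j$, which has a side benefit: since the open sets $\Omega_j$ cover $\Omega$, every $x\in\Omega$ lies in the interior of some $\Omega_j$, whence $V=\sum_j v_j\ge v_j(x)>0$ automatically, and your $\varepsilon\phi_p$ perturbation becomes unnecessary. Second, to establish $\theta>0$ you fall back on the identity $\lim_{q\to0^+}\bigl(|\Omega|^{-1}\int_\Omega g^q\bigr)^{1/q}=\exp\bigl(|\Omega|^{-1}\int_\Omega\log g\bigr)$; in the paper this is Proposition~\ref{intlog1}, stated \emph{after} the present theorem (though logically independent of it). The paper's lower bound is instead direct and elementary: for each $q_n\to0^+$ pick $j_n$ minimizing $|\Omega_j|^{-1}\int_{\Omega_j}|v_j|^{q_n}$ over $j$, and use $\sum_j|\Omega_j|\ge|\Omega|$ together with the uniform bound from Lemma~\ref{P3} to get $\bigl(|\Omega|^{-1}\int_\Omega V^{q_n}\bigr)^{1/q_n}\ge e^{-1-\frac12-\cdots-\frac1N}$, with no logarithms involved. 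Your route is salvageable, but as written it rests on an unproved structural assumption (disjoint star-shaped decomposition) and a forward reference; the paper's version avoids both.
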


\begin{proof}
Let $\Omega_{1},\Omega_{2},\ldots,\Omega_{m}$ be star-shaped subdomains of
$\Omega$ such that $\Omega=\bigcup\limits_{j=1}^{m}\Omega_{j}$ (not
necessarily disjoint).

According Lemma \ref{P3}, for each $j\in\Lambda:=\left\{  1,2,\ldots
,m\right\}  $ we can take $v_{j}\in W_{0}^{1,p}(\Omega_{j})$ such that
$v_{j}>0$ in $\Omega_{j}$ and
\[
e^{-1-\frac{1}{2}-\frac{1}{3}-\cdots-\frac{1}{N}}\leq\lim_{q\rightarrow0^{+}%
}\left(  \frac{1}{\left\vert \Omega_{j}\right\vert }\int_{\Omega_{j}%
}\left\vert v_{j}\right\vert ^{q}\mathrm{d}x\right)  ^{\frac{1}{q}}%
=\inf_{0<s\leq1}\left(  \frac{1}{\left\vert \Omega_{j}\right\vert }%
\int_{\Omega_{j}}\left\vert v_{j}\right\vert ^{s}\mathrm{d}x\right)
^{\frac{1}{s}}.
\]
Thus,%
\[
e^{-1-\frac{1}{2}-\frac{1}{3}-\cdots-\frac{1}{N}}\leq\left(  \frac
{1}{\left\vert \Omega_{j}\right\vert }\int_{\Omega_{j}}\left\vert
v_{j}\right\vert ^{q}\mathrm{d}x\right)  ^{\frac{1}{q}},\quad j\in
\Lambda,\quad0<q\leq1.
\]

By extending $v_{j}$ to zero outside $\Omega_{j}$ we can consider that $v_{j}$
belongs to $W_{0}^{1,p}(\Omega).$ Thus,
\[
V:=\sum_{j=1}^{m}v_{j}\in W_{0}^{1,p}(\Omega).
\]

Now, let $q_{n}\rightarrow0^{+}$ and, for each $n\in\mathbb{N},$ let $j_{n}%
\in\Lambda$ be such that
\[
\frac{1}{\left\vert \Omega_{j_{n}}\right\vert }\int_{\Omega_{j_{n}}}\left\vert
v_{j_{n}}\right\vert ^{q_{n}}\mathrm{d}x=\min\left\{  \frac{1}{\left\vert
\Omega_{j}\right\vert }\int_{\Omega_{j}}\left\vert v_{j}\right\vert ^{q_{n}%
}\mathrm{d}x:j\in\Lambda\right\}  .
\]
Then, for each fixed $n\in\mathbb{N}$ we have%
\begin{align*}
\int_{\Omega}\left\vert V\right\vert ^{q_{n}}\mathrm{d}x  &  =\sum_{j=1}%
^{m}\left(  \frac{1}{\left\vert \Omega_{j}\right\vert }\int_{\Omega_{j}%
}\left\vert v_{j}\right\vert ^{q_{n}}\mathrm{d}x\right)  \left\vert \Omega
_{j}\right\vert \\
&  \geq\frac{1}{\left\vert \Omega_{j_{n}}\right\vert }\int_{\Omega_{j_{n}}%
}\left\vert v_{j_{n}}\right\vert ^{q_{n}}\mathrm{d}x\sum_{j=1}^{m}\left\vert
\Omega_{j}\right\vert \\
&  \geq\frac{\left\vert \Omega\right\vert }{\left\vert \Omega_{j_{n}%
}\right\vert }\int_{\Omega_{j_{n}}}\left\vert v_{j_{n}}\right\vert ^{q_{n}%
}\mathrm{d}x\geq\left\vert \Omega\right\vert (e^{-1-\frac{1}{2}-\frac{1}%
{3}-\cdots-\frac{1}{N}})^{q_{n}},
\end{align*}
from which we conclude that
\[
\left(  \frac{1}{\left\vert \Omega\right\vert }\int_{\Omega}\left\vert
V\right\vert ^{q_{n}}\mathrm{d}x\right)  ^{\frac{1}{q_{n}}}\geq e^{-1-\frac
{1}{2}-\frac{1}{3}-\cdots-\frac{1}{N}}>0.
\]
It follows that%
\[
\theta:=\lim_{q\rightarrow0^{+}}\left(  \frac{1}{\left\vert \Omega\right\vert
}\int_{\Omega}\left\vert V\right\vert ^{q}\mathrm{d}x\right)  ^{\frac{1}{q}%
}\geq e^{-1-\frac{1}{2}-\frac{1}{3}-\cdots-\frac{1}{N}}>0.
\]

Therefore, the function $v:=\theta^{-1}V$ belongs to $W_{0}^{1,p}(\Omega),$ is
positive in $\Omega$ and satisfies%
\[
\lim_{q\rightarrow0^{+}}\left(  \frac{1}{\left\vert \Omega\right\vert }%
\int_{\Omega}\left\vert v\right\vert ^{q}\mathrm{d}x\right)  ^{\frac{1}{q}%
}=1.
\]

Now, (\ref{mufin}) follows immediately since%
\[
\mu(\Omega)=\lim_{q\rightarrow0^{+}}\lambda_{q}(\Omega)\left\vert
\Omega\right\vert ^{\frac{p}{q}}\leq\lim_{q\rightarrow0^{+}}\frac{\left\Vert
\nabla v\right\Vert _{p}^{p}}{\left(  \int_{\Omega}\left\vert v\right\vert
^{q}\mathrm{d}x\right)  ^{\frac{p}{q}}}\left\vert \Omega\right\vert ^{\frac
{p}{q}}=\left\Vert \nabla v\right\Vert _{p}^{p}.
\]

\end{proof}

It might be interesting to know an explicit lower bound for an abstract
minimum such as $\mu(\Omega).$ Thus, by combining (\ref{lowbmu}) with
(\ref{aux1}) we have
\begin{equation}
\lambda_{1}(\Omega^{\ast})\left\vert \Omega\right\vert ^{p}\leq\lambda
_{1}(\Omega)\left\vert \Omega\right\vert ^{p}\leq\mu(\Omega), \label{lowbmu1}%
\end{equation}
where $\Omega^{\ast}$ denotes the ball centered at the origin with radius
$R=(\left\vert \Omega\right\vert /\omega_{N})^{\frac{1}{N}},$ so that
$\left\vert \Omega^{\ast}\right\vert =\left\vert \Omega\right\vert .$ It is a
known fact (see \cite{GEJMAA}) that $\lambda_{1}(D)=\left\Vert \phi
_{p,D}\right\Vert _{1}^{1-p},$ where $D$ is a bounded domain and $\phi_{p,D}$
denotes its $p$-torsion function. Since the $p$-torsion function of a ball
$B_{R}$ of radius $R$ is explicitly given by
\[
\phi_{p,B_{R}}(x)=\frac{p-1}{p}N^{-\frac{1}{p-1}}(R^{\frac{p}{p-1}}-\left\vert
x\right\vert ^{\frac{p}{p-1}}),\quad0\leq\left\vert x\right\vert \leq R,
\]
we can compute $\lambda_{1}(\Omega^{\ast})$ explicitly and so obtain, from
(\ref{lowbmu1}), the following estimate%
\begin{equation}
N\left(  N+\frac{p}{p-1}\right)  ^{p-1}(\omega_{N})^{\frac{p}{N}}\left\vert
\Omega\right\vert ^{1-\frac{p}{N}}\leq\mu(\Omega). \label{lowbmu2}%
\end{equation}

%\begin{remark}
%Note that when $p=N,$ the lower bound above does not depend on $\Omega.$ This
%fact is to be expected, since a simple scaling argument shows that if $p=N$
%then $\mu(\Omega)$ does not depend on $\Omega.$
%\end{remark}

For the sake of clarity, we will make use of the following scaling property in the
next proof:
\[
\lambda_{q}(\Omega)\left\vert \Omega\right\vert ^{\frac{p}{q}}=\left\vert
\Omega\right\vert ^{1-\frac{N}{p}}\lambda_{q}(\Omega_{1}) %
\]
where $\Omega_{1}:=\left\{  \left\vert \Omega\right\vert ^{-\frac{1}{N}}%
x:x\in\Omega\right\}  $ is such that $\left\vert \Omega_{1}\right\vert =1.$
Thus,
\begin{equation}
\mu(\Omega)=\left\vert \Omega\right\vert ^{1-\frac{N}{p}} \mu(\Omega_{1}).
\label{scaling}%
\end{equation}
%(One can easily check (\ref{vol1}) by applying a simple scaling argument.)

Let us define,
\[
\mathcal{M}(\Omega):=\left\{  v\in W_{0}^{1,p}(\Omega):\lim_{q\rightarrow
0^{+}}\left(  \frac{1}{\left\vert \Omega\right\vert }\int_{\Omega}\left\vert
v\right\vert ^{q}\mathrm{d}x\right)  ^{\frac{1}{q}}=1\right\}  .
\]
It is easy to check that $\mathcal{M}(\Omega)$ has infinitely many elements by
combining Lemma \ref{P3} with the construction in the proof of Theorem
\ref{main1}.

As pointed out in the Introduction, for each $q\in(0,1)$ there exist
$\alpha_{q}\in(0,1)$ and $u_{q}\in W_{0}^{1,p}(\Omega)\cap C^{1,\alpha_{q}%
}(\overline{\Omega})$ such that
\begin{equation}
u_{q}>0\quad\mathrm{in}\quad\Omega,\quad\lambda_{q}(\Omega)=\left\Vert \nabla
u_{q}\right\Vert _{p}^{p},\quad\int_{\Omega}\left\vert u_{q}\right\vert
^{q}\mathrm{d}x=1\label{ann1}%
\end{equation}
and%
\begin{equation}
\int_{\Omega}\left\vert \nabla u_{q}\right\vert ^{p-2}\nabla u_{q}\cdot
\nabla\varphi\mathrm{d}x=\lambda_{q}(\Omega)\int_{\Omega}u_{q}^{q-1}%
\varphi\mathrm{d}x\quad\mathrm{for}\,\mathrm{all}\;\varphi\in W_{0}%
^{1,p}(\Omega).\label{ann2}%
\end{equation}
The existence of $u_{q}$ satisfying (\ref{ann1}) and (\ref{ann2}) is proved in
\cite{Anello}, whereas the H\"{o}lder regularity of $u_{q}$ follows directly
from \cite[Theorem 2.2 (i)]{GST1}. Let us observe that the proof of
(\ref{ann2}) made in \cite{Anello} is restricted to the functions $\varphi\in
W_{0}^{1,p}(\Omega)$ such that $\operatorname*{supp}\varphi\subset\Omega.$
However, this restriction can be dropped by using arguments of \cite{GST,
Mohammed} based on Fatou's lemma combined with the density of $C_{c}^{\infty
}(\Omega)$ in $W_{0}^{1,p}(\Omega).$ We will make use of these arguments in
the next proof.

\begin{theorem}
\label{theoexist}For each $q\in(0,1),$ let $u_{q}\in W_{0}^{1,p}(\Omega)\cap
C^{1,\alpha_{q}}(\overline{\Omega})$ satisfying (\ref{ann1}) and (\ref{ann2}).
There exists $u\in\mathcal{M}(\Omega)\cap C^{0,\alpha}(\overline{\Omega}),$
for some $\alpha\in(0,1),$ such that:

\begin{enumerate}
\item[(a)] $u=\lim\limits_{q\rightarrow0^{+}}(\left\vert \Omega\right\vert
^{\frac{1}{q}}u_{q})$ $\mathrm{in}$ $W_{0}^{1,p}(\Omega);$

\item[(b)] $\mu(\Omega)=\left\Vert \nabla u\right\Vert _{p}^{p}=\min\left\{
\left\Vert \nabla v\right\Vert _{p}^{p}:v\in\mathcal{M}(\Omega)\right\}  ;$

\item[(c)] $-\Delta_{p}u=\frac{\mu(\Omega)}{\left\vert \Omega\right\vert
}u^{-1},\quad\mathrm{in}\quad\Omega;$

\item[(d)] $0<A\mu(\Omega)^{\frac{1}{p}}\phi_{p}(x)\leq u(x)\leq B\mu
(\Omega)^{\frac{1}{p}},$ for almost every $x\in\Omega,$ where $A$ and $B$ are
positive constants depending only on $N,$ $p$ and $\left\vert \Omega
\right\vert .$
\end{enumerate}
\end{theorem}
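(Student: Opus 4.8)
The plan is to extract a limit of the rescaled minimizers $w_q:=|\Omega|^{1/q}u_q$ and identify it with the minimizer of the functional over $\mathcal{M}(\Omega)$. First I would show that the family $\{w_q\}$ is bounded in $W_0^{1,p}(\Omega)$. Since $\|\nabla u_q\|_p^p=\lambda_q(\Omega)$ and the quantity $\lambda_q(\Omega)|\Omega|^{p/q}$ decreases to $\mu(\Omega)<\infty$ (by Theorem~\ref{main1} and the monotonicity discussed before it), we get $\|\nabla w_q\|_p^p=\lambda_q(\Omega)|\Omega|^{p/q}\le\mu(\Omega)$ for all small $q$; in fact $\limsup_{q\to0^+}\|\nabla w_q\|_p^p\le\mu(\Omega)$. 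Hence along a subsequence $q_n\to0^+$, $w_{q_n}\rightharpoonup u$ weakly in $W_0^{1,p}(\Omega)$ and strongly in $L^s(\Omega)$ for every $s<p^\star$, with $\|\nabla u\|_p^p\le\liminf\|\nabla w_{q_n}\|_p^p\le\mu(\Omega)$ by weak lower semicontinuity. The lower bounds from Lemma~\ref{lowb} applied to $u_q$ (which solves (\ref{singdiric}) with $\lambda=\lambda_q(\Omega)$, $q$ in place of $q-1$'s exponent) give $w_q\ge(K_{N,p}|\Omega|^{p/(N(p-q))})^{(q-1)/(p-1)}(\lambda_q(\Omega)|\Omega|^{p/q})^{1/(p-q)}\phi_p$; letting $q\to0^+$ the prefactor converges to a positive constant times $\mu(\Omega)^{1/p}$, so $u\ge A\mu(\Omega)^{1/p}\phi_p>0$ a.e.\ in $\Omega$, and similarly Lemma~\ref{linfty} gives the upper bound $u\le B\mu(\Omega)^{1/p}$; this proves (d) once the limit is established, and in particular $u>0$ in $\Omega$.

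Next I would show $u\in\mathcal{M}(\Omega)$ and $\|\nabla u\|_p^p=\mu(\Omega)$. For the membership, one uses $\int_\Omega|u_q|^q\,dx=1$, i.e.\ $\int_\Omega|w_q|^q\,dx=|\Omega|$, so $\bigl(\tfrac1{|\Omega|}\int_\Omega|w_q|^q\,dx\bigr)^{1/q}=1$; combined with the uniform lower bound $w_q\ge c\,\phi_p$ (which is bounded below on compacts) and the uniform upper bound, one passes to the limit in $\bigl(\tfrac1{|\Omega|}\int_\Omega|w_q|^q\,dx\bigr)^{1/q}$ using the monotonicity of $q\mapsto(\tfrac1{|\Omega|}\int|v|^q)^{1/q}$ and dominated convergence on $\log$-integrals to conclude $\lim_{q\to0^+}\bigl(\tfrac1{|\Omega|}\int_\Omega|u|^q\,dx\bigr)^{1/q}=1$, i.e.\ $u\in\mathcal{M}(\Omega)$. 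Then by Theorem~\ref{main1} (which produced some $v\in\mathcal{M}(\Omega)$ with $\mu(\Omega)\le\|\nabla v\|_p^p$) and the definition of $\mu(\Omega)$ as an infimum of $\lambda_q(\Omega)|\Omega|^{p/q}$, one has $\mu(\Omega)\le\|\nabla u\|_p^p$ for every $u\in\mathcal{M}(\Omega)$; combined with $\|\nabla u\|_p^p\le\mu(\Omega)$ from the lower semicontinuity step, this forces equality, proves (b), and also upgrades weak to strong convergence $w_{q_n}\to u$ in $W_0^{1,p}(\Omega)$ via the uniform convexity of the norm (norm convergence plus weak convergence). Since the limit $u$ is uniquely characterized (it is the unique minimizer — uniqueness coming from strict convexity of $v\mapsto\|\nabla v\|_p^p$ restricted to the convex-after-log structure, or from Proposition~\ref{propuniq} applied to the Euler equation in (c)), the full limit in (a) follows, not merely along a subsequence.

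For (c), the Euler--Lagrange equation, I would pass to the limit in the weak formulation (\ref{ann2}). Rewrite (\ref{ann2}) with $w_q=|\Omega|^{1/q}u_q$: since $\nabla u_q=|\Omega|^{-1/q}\nabla w_q$ and $u_q^{q-1}=|\Omega|^{-(q-1)/q}w_q^{q-1}$, equation (\ref{ann2}) becomes $\int_\Omega|\nabla w_q|^{p-2}\nabla w_q\cdot\nabla\varphi\,dx=\lambda_q(\Omega)|\Omega|^{(p-q+1)/q-\,?}\int_\Omega w_q^{q-1}\varphi\,dx$; carefully tracking the exponent gives the coefficient $\lambda_q(\Omega)|\Omega|^{p/q}\cdot|\Omega|^{-1}$ and $w_q^{q-1}\to u^{-1}$ pointwise. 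On compact subsets $\Omega'\Subset\Omega$ the integrand $\lambda_q(\Omega)w_q^{q-1}$ is uniformly bounded (Remark~\ref{regu}), so dominated convergence handles the right side for $\varphi$ with compact support; the left side passes by the strong $W_0^{1,p}$ convergence. To remove the compact-support restriction on $\varphi$ one invokes the Fatou-lemma argument attributed to \cite{GST,Mohammed} together with density of $C_c^\infty(\Omega)$, exactly as the paragraph before the theorem anticipates. This yields $-\Delta_p u=\mu(\Omega)|\Omega|^{-1}u^{-1}$ weakly, and the $C^{0,\alpha}(\overline\Omega)$ regularity then follows from \cite[Theorem 2.2 (ii)]{GST1} applied to this singular problem.

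The main obstacle I anticipate is the limit passage in the $q$-dependent integral constraint and in the nonlinear term $w_q^{q-1}$: near $\partial\Omega$ the functions $w_q$ degenerate to $0$ and $w_q^{q-1}\to+\infty$, so the convergence $w_q^{q-1}\to u^{-1}$ and the identity $\lim(\tfrac1{|\Omega|}\int|w_q|^q)^{1/q}=1$ cannot be obtained by naive pointwise arguments; they genuinely require the two-sided bounds $c\,\phi_p\le w_q\le C$ from Lemmas~\ref{linfty}--\ref{lowb} to control $w_q^{q-1}$ on compacts (for (c)) and to control $\int\log w_q$ via the integrability of $\log\phi_p$ (for the constraint). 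Getting the exponent bookkeeping in the rescaling exactly right — so that the coefficient in (c) comes out as $\mu(\Omega)|\Omega|^{-1}$ and not some other power of $|\Omega|$ — is the other delicate point, though it is purely computational.
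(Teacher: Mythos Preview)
Your proposal is correct and follows essentially the same route as the paper. The paper streamlines the bookkeeping by normalizing to $|\Omega|=1$ via scaling at the outset (so that $w_q=u_q$ and the coefficient issue you flag as ``delicate'' disappears), but the logical skeleton---bounded family, weak limit, lower semicontinuity, strong convergence from norm convergence, passage to the limit in the equation first for $\varphi\in C_c^\infty(\Omega)$ and then the Fatou/density extension, uniqueness of the limit via Proposition~\ref{propuniq}---is identical to yours.

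The one place where the paper is sharper than your sketch is the verification that $u\in\mathcal{M}(\Omega)$. You propose to control the diagonal limit $\bigl(\tfrac{1}{|\Omega|}\int|w_q|^q\bigr)^{1/q}$ through log-integrals and the bound $w_q\ge c\,\phi_p$; this can be made to work but needs $\log\phi_p\in L^1(\Omega)$ and some care with the double limit. The paper avoids this entirely: for the inequality $\le 1$ it simply feeds the limit $u$ back into the definition of $\lambda_q(\Omega)$, obtaining $\mu(\Omega)\lim_{q\to0^+}\bigl(\int|u|^q\bigr)^{p/q}\le\|\nabla u\|_p^p\le\mu(\Omega)$; for the inequality $\ge 1$ it freezes the exponent at an auxiliary $s\in(0,1)$, uses the monotonicity $1=\bigl(\int|u_{q_n}|^{q_n}\bigr)^{1/q_n}\le\bigl(\int|u_{q_n}|^s\bigr)^{1/s}$ for $q_n<s$, passes $n\to\infty$ by dominated convergence (only the uniform $L^\infty$ bound from Lemma~\ref{linfty} is needed here, not the lower bound), and finally lets $s\to0^+$. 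This decoupling of the two limits is the cleanest way to handle the step you correctly identified as the main obstacle.
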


\begin{proof}
Taking (\ref{scaling}) into account, we assume in this proof, without loss of
generality, that $\left\vert \Omega\right\vert =1.$ Thus,
\begin{equation}
\lim_{q\rightarrow0^{+}}\lambda_{q}(\Omega)=\lim_{q\rightarrow0^{+}}\left\Vert
\nabla u_{q}\right\Vert _{p}^{p}=\mu(\Omega)\in(0,\infty). \label{l01}%
\end{equation}

Since
\begin{equation}
\lambda_{q}(\Omega)\left(  \int_{\Omega}\left\vert v\right\vert ^{q}%
\mathrm{d}x\right)  ^{\frac{p}{q}}\leq\left\Vert \nabla v\right\Vert _{p}%
^{p}\quad\mathrm{for}\,\mathrm{all}\;v\in W_{0}^{1,p}(\Omega), \label{c}%
\end{equation}
we have
\begin{equation}
\mu(\Omega)\leq\left\Vert \nabla v\right\Vert _{p}^{p}\quad\mathrm{for}%
\,\mathrm{all}\;v\in\mathcal{M}(\Omega). \label{g}%
\end{equation}

It follows from (\ref{l01}) that there exist $q_{n}\rightarrow0^{+}$ and $u\in
W_{0}^{1,p}(\Omega)$ such that $u\geq0$ in $\Omega,$ $u_{q_{n}}\rightharpoonup
u$ (weakly) in $W_{0}^{1,p}(\Omega)$ and $u_{q_{n}}\rightarrow u$ pointwise
almost everywhere in $\Omega.$ Hence,%
\begin{equation}
\left\Vert \nabla u\right\Vert _{p}\leq\liminf_{n\rightarrow\infty}\left\Vert
\nabla u_{q_{n}}\right\Vert _{p}=\lim_{n\rightarrow\infty}\left\Vert \nabla
u_{q_{n}}\right\Vert _{p}=\lim_{n\rightarrow\infty}\lambda_{q_{n}}%
(\Omega)^{\frac{1}{p}}=\mu(\Omega)^{\frac{1}{p}}. \label{d}%
\end{equation}

We note from (\ref{c}), with $v=u,$ that
\[
\mu(\Omega)\lim_{q\rightarrow0^{+}}\left(  \int_{\Omega}\left\vert
u\right\vert ^{q}\mathrm{d}x\right)  ^{\frac{p}{q}}\leq\left\Vert \nabla
u\right\Vert _{p}^{p}.
\]
Combining this estimate with (\ref{d}) we obtain
\begin{equation}
\lim_{q\rightarrow0^{+}}\left(  \int_{\Omega}\left\vert u\right\vert
^{q}\mathrm{d}x\right)  ^{\frac{1}{q}}\leq1. \label{a}%
\end{equation}

On the other hand, for each $s\in(0,1)$ and every $n$ large enough (such that
$q_{n}<s$), we have%
\[
1=\left(  \int_{\Omega}\left\vert u_{q_{n}}\right\vert ^{q_{n}}\mathrm{d}%
x\right)  ^{\frac{1}{q_{n}}}\leq\left(  \int_{\Omega}\left\vert u_{q_{n}%
}\right\vert ^{s}\mathrm{d}x\right)  ^{\frac{1}{s}}.
\]
Hence,%
\[
1\leq\lim_{n\rightarrow\infty}\left(  \int_{\Omega}\left\vert u_{q_{n}%
}\right\vert ^{s}\mathrm{d}x\right)  ^{\frac{1}{s}}=\left(  \int_{\Omega
}\left\vert u\right\vert ^{s}\mathrm{d}x\right)  ^{\frac{1}{s}},
\]
where we have used Dominated Convergence Theorem, since%
\[
0\leq u_{q_{n}}\leq K_{N,p}\lambda_{q_{n}}(\Omega)^{\frac{1}{p-q_{n}}}\leq
K_{N,p}\mu(\Omega)^{\frac{1}{p-q_{n}}}%
\]
according Lemma \ref{linfty}. Thus, we conclude that
\begin{equation}
1\leq\lim_{s\rightarrow0^{+}}\left(  \int_{\Omega}\left\vert u\right\vert
^{s}\mathrm{d}x\right)  ^{\frac{1}{s}}. \label{b}%
\end{equation}

Gathering (\ref{a}) and (\ref{b}) we obtain%
\[
\lim_{q\rightarrow0^{+}}\left(  \int_{\Omega}\left\vert u\right\vert
^{q}\mathrm{d}x\right)  ^{\frac{1}{q}}=1.
\]
It follows that $u\in\mathcal{M}(\Omega)$ and thus, by combining (\ref{g}) and
(\ref{d}) we conclude that%
\begin{equation}
\mu(\Omega)^{\frac{1}{p}}=\left\Vert \nabla u\right\Vert _{p}=\lim
_{n\rightarrow\infty}\left\Vert \nabla u_{q_{n}}\right\Vert _{p}, \label{f}%
\end{equation}
which ends the proof of the claim \textrm{(b)}.

Taking into account the weak convergence $u_{q_{n}}\rightharpoonup u,$ the
second equality in (\ref{f}) implies that $u_{q_{n}}\rightarrow u$ (strongly)
in $W_{0}^{1,p}(\Omega).$ In view of (\ref{ann2}) we have%
\[
\int_{\Omega}\left\vert \nabla u_{q_{n}}\right\vert ^{p-2}\nabla u_{q_{n}%
}\cdot\nabla\varphi\mathrm{d}x=\lambda_{q_{n}}(\Omega)\int_{\Omega}u_{q_{n}%
}^{q_{n}-1}\varphi\mathrm{d}x,\quad\mathrm{for}\,\mathrm{all}\;\varphi\in
W_{0}^{1,p}(\Omega).
\]

Strong convergence $u_{q_{n}}\rightarrow u$ in $W_{0}^{1,p}(\Omega)$
guarantees that%
\begin{equation}
\lim_{n\rightarrow\infty}\int_{\Omega}\left\vert \nabla u_{q_{n}}\right\vert
^{p-2}\nabla u_{q_{n}}\cdot\nabla\varphi\mathrm{d}x=\int_{\Omega}\left\vert
\nabla u\right\vert ^{p-2}\nabla u\cdot\nabla\varphi\mathrm{d}x,\quad
\mathrm{for}\,\mathrm{all}\;\varphi\in W_{0}^{1,p}(\Omega) \label{h3}%
\end{equation}
and (\ref{l01}) guarantees that%
\begin{equation}
\lim_{n\rightarrow\infty}\lambda_{q_{n}}(\Omega)\int_{\Omega}u_{q_{n}}%
^{q_{n}-1}\varphi\mathrm{d}x=\mu(\Omega)\lim_{n\rightarrow\infty}\int_{\Omega
}u_{q_{n}}^{q_{n}-1}\varphi\mathrm{d}x,\quad\mathrm{for}\,\mathrm{all}%
\;\varphi\in W_{0}^{1,p}(\Omega). \label{h2}%
\end{equation}

Let us first assume that \textrm{supp\ }$\varphi\subset\Omega.$ Then,
Dominated Convergence Theorem yields
\begin{equation}
\lim_{n\rightarrow\infty}\int_{\Omega}u_{q_{n}}^{q_{n}-1}\varphi
\mathrm{d}x=\int_{\Omega}u^{-1}\varphi\mathrm{d}x, \label{h1}%
\end{equation}
since Lemma \ref{linfty} and Lemma \ref{lowb} imply that $0<c_{1}\leq
u_{q_{n}}^{q_{n}-1}\varphi\leq c_{2}$ in \textrm{supp\ }$\varphi,$ where the
constants $c_{1}$ and $c_{2}$ are uniform with respect to $n.$ Hence, by
gathering (\ref{h3}), (\ref{h2}) and (\ref{h1}) we have%
\begin{equation}
\int_{\Omega}\left\vert \nabla u\right\vert ^{p-2}\nabla u\cdot\nabla
\varphi\mathrm{d}x=\mu(\Omega)\int_{\Omega}\varphi u^{-1}\mathrm{d}%
x,\quad\mathrm{for}\,\mathrm{all}\;\varphi\in C_{c}^{\infty}(\Omega).
\label{h4}%
\end{equation}

Thus, in order to prove \textrm{(c) } we need to show that (\ref{h4}) holds,
in fact, for any $\varphi\in W_{0}^{1,p}(\Omega),$ which reduces to prove that
(\ref{h1}) holds for any $\varphi\in W_{0}^{1,p}(\Omega).$ We prove this by
following arguments of \cite{GST, Mohammed}. So, let $w\in W_{0}^{1,p}%
(\Omega)$ be arbitrary and take a sequence $\left\{  \xi_{n}\right\}  \subset
C_{c}^{\infty}(\Omega)$ of nonnegative functions such that $\xi_{n}%
\rightarrow\left\vert w\right\vert ,$ strongly in $W_{0}^{1,p}(\Omega)$ and
pointwise almost everywhere in $\Omega.$ Hence, by applying: Fatou's lemma,
(\ref{h4}) and H\"{o}lder's inequality, we obtain%
\begin{align*}
\left\vert \int_{\Omega}wu^{-1}\mathrm{d}x\right\vert  &  \leq\int_{\Omega
}\left\vert w\right\vert u^{-1}\mathrm{d}x\\
&  \leq\liminf_{n\rightarrow\infty}\int_{\Omega}\xi_{n}u^{-1}\mathrm{d}x\\
&  =\lim_{n\rightarrow\infty}\int_{\Omega}\left\vert \nabla u\right\vert
^{p-2}\nabla u\cdot\nabla\xi_{n}\mathrm{d}x\leq\left\Vert \nabla u\right\Vert
_{p}^{p-1}\lim_{n\rightarrow\infty}\left\Vert \nabla\xi_{n}\right\Vert
_{p}=\left\Vert \nabla u\right\Vert _{p}^{p-1}\left\Vert \nabla w\right\Vert
_{p}.
\end{align*}

Now, let $\varphi$ be an arbitrary function in $W_{0}^{1,p}(\Omega)$ and take
$\left\{  \varphi_{n}\right\}  \subset C_{c}^{\infty}(\Omega)$ such that
$\varphi_{n}\rightarrow\varphi$ strongly in $W_{0}^{1,p}(\Omega).$ Then, by
using $\varphi_{n}-\varphi$ in the place of $w,$ we obtain%
\[
\lim_{n\rightarrow\infty}\left\vert \int_{\Omega}(\varphi_{n}-\varphi
)u^{-1}\mathrm{d}x\right\vert \leq\left\Vert \nabla u\right\Vert _{p}%
^{p-1}\lim_{n\rightarrow\infty}\left\Vert \nabla(\varphi_{n}-\varphi
)\right\Vert _{p}=0,
\]
which yields
\begin{equation}
\lim_{n\rightarrow\infty}\int_{\Omega}\varphi_{n}u^{-1}\mathrm{d}%
x=\int_{\Omega}\varphi u^{-1}\mathrm{d}x. \label{h5}%
\end{equation}

Since $\varphi_{n}\in C_{c}^{\infty}(\Omega)$ we obtain from (\ref{h4}) that
\begin{equation}
\lim_{n\rightarrow\infty}\int_{\Omega}\varphi_{n}u^{-1}\mathrm{d}x=\mu
(\Omega)^{-1}\lim_{n\rightarrow\infty}\int_{\Omega}\left\vert \nabla
u\right\vert ^{p-2}\nabla u\cdot\nabla\varphi_{n}\mathrm{d}x=\mu(\Omega
)^{-1}\int_{\Omega}\left\vert \nabla u\right\vert ^{p-2}\nabla u\cdot
\nabla\varphi\mathrm{d}x. \label{h6}%
\end{equation}

Therefore, by combining (\ref{h5}) with (\ref{h6}) we conclude that (\ref{h1})
holds true for any $\varphi\in W_{0}^{1,p}(\Omega),$ which proves the claim
\textrm{(c)}.

Claim \textrm{(d) }now follows after combining Lemma \ref{linfty} with Lemma
\ref{lowb}. Theorem 2.2 (ii) of \cite{GST1} implies that $u\in C^{0,\alpha
}(\overline{\Omega})$ for some $\alpha\in(0,1).$

Claim \textrm{(a)} follows from the uniqueness of the weak solutions of
\begin{equation}
\left\{
\begin{array}
[c]{ll}%
-\Delta_{p}w=\frac{\mu(\Omega)}{\left\vert \Omega\right\vert }w^{-1} &
\mathrm{in\ }\Omega,\\
w>0 & \mathrm{in\ }\Omega,\\
w=0 & \mathrm{on\ }\partial\Omega,
\end{array}
\right.  \label{weakl0}%
\end{equation}
combined with the fact that $u_{q_{n}}\rightarrow u$ strongly in $W_{0}%
^{1,p}(\Omega).$ Indeed, these facts together imply that $u$ is the unique
limit function of the family $\left\{  u_{q}\right\}  ,$ as $q\rightarrow
0^{+}.$
\end{proof}

Our next goal is to prove that the solution $u$ of (\ref{weakl0}) satisfies
\[
\int_{\Omega}\log u\mathrm{d}x=0.
\]

\begin{proposition}
\label{into}Let $v\in L^{1}(\Omega).$ Then $\log\left\vert v\right\vert $ is
Lebesgue measurable in $\Omega$ and%
\[
-\infty\leq\lim_{q\rightarrow0^{+}}\int_{\Omega}\left\vert v\right\vert
^{q}\log\left\vert v\right\vert \mathrm{d}x=\int_{\Omega}\log\left\vert
v\right\vert \mathrm{d}x\leq2e^{-1}\left\Vert v\right\Vert _{1}.
\]

\end{proposition}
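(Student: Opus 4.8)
The plan is to establish the three-part claim in turn. First I would address measurability: since $v\in L^1(\Omega)$, the set $\{v=0\}$ is measurable and on its complement $\log|v|$ is the composition of the continuous function $\log|\cdot|$ with the measurable function $v$, hence $\log|v|$ is measurable (with value $-\infty$ allowed on $\{v=0\}$, which is fine for Lebesgue measurability into $[-\infty,\infty)$). For the identity, the key elementary inequality is that for $t\ge 0$ the function $q\mapsto t^q\log t$ behaves monotonically in a way that lets us pass to the limit: writing $g_q(x):=|v(x)|^q\log|v(x)|$, I would split $\Omega$ into $\Omega_+:=\{|v|\ge 1\}$ and $\Omega_-:=\{|v|<1\}$. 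On $\Omega_-$ the integrand $g_q$ is nonpositive, and as $q\downarrow 0^+$ we have $|v|^q\log|v|\uparrow \log|v|$ pointwise (since $|v|^q\downarrow 1$ and $\log|v|\le 0$ there), so Monotone Convergence gives $\int_{\Omega_-} g_q\,dx \to \int_{\Omega_-}\log|v|\,dx\in[-\infty,0]$. On $\Omega_+$ the integrand is nonnegative and $|v|^q\log|v|\le |v|^q\cdot|v| \le |v|\cdot\max(1,|v|^0)$... more carefully: for $|v|\ge 1$ and $0<q\le 1$ we have $0\le \log|v|\le \frac{1}{q}(|v|^{q}-1)\le \frac{1}{q}|v|^q$, but a cleaner dominating bound is $|v|^q\log|v|\le |v|\log|v|\le$ (not integrable in general). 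Instead I would use the uniform bound $t^q\log t\le t^q\cdot t^{\,1-q}\cdot\frac{1}{e(1-q)}$? Let me instead dominate via the single cheap estimate valid on $\Omega_+$: $|v|^q\log|v| \le |v|$, which follows from $\log s\le s^{1-q}/(e(1-q))\le$ ... the simplest correct statement is $s^q\log s\le s$ for $s\ge 1$ when $q\le 1$ because $\log s\le s^{1-q}$ for $s\ge 1$, and $s^{1-q}\le s$... hmm $s^{1-q}\le s$ needs $s\ge1$, true. And $\log s\le s^{1-q}$ for $s\ge 1$: at $s=1$ both sides agree ($0\le 1$), derivative $1/s$ vs $(1-q)s^{-q}$; this does not obviously hold. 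The honest dominating function I would actually use is $s^q\log s \le \frac{1}{e}\,s\cdot\mathbf 1_{\{s\ge1\}} + \ldots$; in any case on $\Omega_+$ one has the crude bound $0\le |v|^q\log|v|\le |v|$ (since $\log s\le s$... no, $\log s\le s-1\le s$ for all $s>0$, and then $s^q\log s\le s^q\cdot s = s^{1+q}$, not integrable). The clean route: for $s\ge 1$, $\log s\le \frac{2}{e}\, s^{1/2}$ (standard: $\max_{s\ge1} s^{-1/2}\log s = 2/e$), hence $s^q\log s\le \frac{2}{e}s^{q+1/2}\le \frac{2}{e}s^{3/2}$ for $q\le 1$; this still is not $L^1$-dominated by $|v|$. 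So the correct assertion must be that on $\Omega_+$ the family $g_q$ is increasing in $q$ too? For $s\ge 1$, $q\mapsto s^q\log s$ is increasing in $q$, so as $q\downarrow 0$ it decreases to $\log s$; Monotone Convergence (decreasing, with the $q=1$ member $|v|\log|v|$ possibly non-integrable) still gives $\int_{\Omega_+}g_q\,dx\to\int_{\Omega_+}\log|v|\,dx\in[0,\infty]$ by the monotone convergence theorem for a decreasing sequence bounded below by the integrable function $0$. Combining the two pieces (one limit in $[-\infty,0]$, one in $[0,\infty]$, at least one finite side is not guaranteed — but the upper bound below will show the $\Omega_+$ part is finite), we get $\lim_{q\to0^+}\int_\Omega g_q\,dx=\int_\Omega\log|v|\,dx\in[-\infty,\infty)$.

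\textbf{The upper bound.} For the inequality $\int_\Omega\log|v|\,dx\le 2e^{-1}\|v\|_1$, I would combine the elementary pointwise estimate $\log s\le \tfrac{2}{e}\sqrt{s}$ for $s\ge 0$ (equality analysis: the function $s^{-1/2}\log s$ attains its maximum $2/e$ at $s=e^2$) together with the monotonicity already quoted in the paper that $q\mapsto(\tfrac1{|\Omega|}\int_\Omega|v|^q)^{1/q}$ is increasing, hence for $q=1/2$,
\[
\exp\!\left(\frac{1}{|\Omega|}\int_\Omega\log|v|\,dx\right)\le \left(\frac{1}{|\Omega|}\int_\Omega|v|^{1/2}\,dx\right)^{2},
\]
where the left side uses Jensen's inequality (the limit as $q\to0^+$ of the increasing quantity equals $\exp(\tfrac1{|\Omega|}\int\log|v|)$, which is among the facts recalled just before Theorem~\ref{main1}). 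Then applying $\log s\le\tfrac2e\sqrt s$ with $s=\tfrac1{|\Omega|}\int_\Omega|v|^{1/2}$... wait, I want to bound $\int\log|v|$, not $\log$ of an average. Better: take logarithms in the displayed inequality to get $\frac1{|\Omega|}\int_\Omega\log|v|\,dx\le 2\log\!\big(\tfrac1{|\Omega|}\int_\Omega|v|^{1/2}\big)\le 2\cdot\frac{2}{e}\big(\tfrac1{|\Omega|}\int_\Omega|v|^{1/2}\big)^{1/2}\cdot\frac12$? Using $\log s\le\frac2e\sqrt s$ with $s=\tfrac1{|\Omega|}\int_\Omega|v|^{1/2}$ gives $\frac1{|\Omega|}\int_\Omega\log|v|\le \frac{4}{e}\big(\tfrac1{|\Omega|}\int_\Omega|v|^{1/2}\big)^{1/2}$, and then by Jensen (concavity of $\sqrt{\,\cdot\,}$) $\big(\tfrac1{|\Omega|}\int_\Omega|v|^{1/2}\big)^{1/2}\le\big(\tfrac1{|\Omega|}\int_\Omega|v|^{1/2}\cdot|v|^{1/2}? \big)$ — I would instead directly bound $\tfrac1{|\Omega|}\int_\Omega|v|^{1/2}\le\big(\tfrac1{|\Omega|}\int_\Omega|v|\big)^{1/2}=(\|v\|_1/|\Omega|)^{1/2}$ by Hölder, so $\frac1{|\Omega|}\int_\Omega\log|v|\le\frac4e(\|v\|_1/|\Omega|)^{1/4}$ — dimensionally this is not $2e^{-1}\|v\|_1/|\Omega|$. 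The cleanest correct derivation: apply $\log t\le\frac1e\, t/|\Omega|^{0}$... Actually the intended estimate is surely $\log|v|\le \frac{2}{e}|v|^{1/2}$ summed directly: $\int_\Omega\log|v|\,dx\le\frac2e\int_\Omega|v|^{1/2}\,dx$, then crude bound $|v|^{1/2}\le\tfrac12(1+|v|)$? gives $\frac1e(|\Omega|+\|v\|_1)$, not clean either. I suspect the paper uses $\log s\le s/e$ for... no, $\max(\log s)/s=1/e$ at $s=e$, so $\log s\le s/e$ for all $s>0$! Then $\int_\Omega\log|v|\,dx\le\frac1e\int_\Omega|v|\,dx=\frac1e\|v\|_1$, even better than claimed; the factor $2$ presumably comes from handling the $-\infty$ part or from $\log s\le (s^q)/q$ type reasoning. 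In the write-up I would simply use $\log s\le \frac1e s$ (hence $\le\frac2e s$), giving $\int_\Omega\log|v|\,dx\le\int_{\{|v|\ge1\}}\log|v|\,dx\le\frac1e\int_{\{|v|\ge1\}}|v|\,dx\le\frac1e\|v\|_1\le\frac2e\|v\|_1$, which finishes the inequality with room to spare; the bound is stated with constant $2e^{-1}$ merely for safety.

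\textbf{Main obstacle.} The genuinely delicate point is justifying the interchange of limit and integral when the limit is $-\infty$: one must argue separately on $\{|v|\ge1\}$ (where $g_q$ decreases to $\log|v|\ge0$ as $q\downarrow0$, and the limit is finite by the upper bound above, so ordinary monotone/dominated convergence applies) and on $\{|v|<1\}$ (where $g_q$ increases to $\log|v|\le0$, so the monotone convergence theorem applies even if the limit is $-\infty$). I would present these two cases explicitly, note that the $\{|v|\ge1\}$ contribution is finite and nonnegative while the $\{|v|<1\}$ contribution lies in $[-\infty,0]$, conclude that the total limit exists in $[-\infty,\infty)$ and equals $\int_\Omega\log|v|\,dx$, and then append the one-line upper bound. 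The measurability claim is immediate and stated first only for completeness.
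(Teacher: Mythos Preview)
Your approach—split into $\{|v|\ge1\}$ and $\{|v|<1\}$ and pass to the limit on each piece—is exactly the paper's. Two things need cleaning up.

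First, on $\{|v|<1\}$ you have the monotonicity backwards: for $0<t<1$, $t^{q}=e^{q\log t}$ \emph{increases} to $1$ as $q\downarrow0$ (since $\log t<0$), so $g_q(x)=|v(x)|^{q}\log|v(x)|$ \emph{decreases} to $\log|v(x)|$ there, not increases. The fix is immediate: apply Monotone Convergence to the nonnegative increasing family $-g_q\uparrow -\log|v|$, exactly as the paper does.

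Second, on $\{|v|\ge1\}$ you correctly note $g_q\downarrow\log|v|\ge0$ but never secure an integrable majorant; a decreasing family bounded below by $0$ is not enough for the limit of integrals, since the starting term may have infinite integral. The paper resolves this and the upper bound in one stroke, using precisely the inequality you wrote down and then abandoned: $\max_{s\ge1}s^{-1/2}\log s=2/e$ gives, for every $s\ge1$ and $q\in(0,\tfrac12)$,
\[
0\le s^{q}\log s\le s^{1/2}\log s\le \tfrac{2}{e}\,s.
\]
Hence $g_q\le \tfrac{2}{e}|v|\in L^{1}(\Omega)$ on $\{|v|\ge1\}$ for all $q<\tfrac12$, which is the dominating function for Dominated Convergence, and the same pointwise bound immediately yields $\int_{\{|v|\ge1\}}\log|v|\,\mathrm{d}x\le 2e^{-1}\|v\|_{1}$. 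This is why the stated constant is $2/e$ rather than the sharper $1/e$ you reached via $\log s\le s/e$: the latter bounds $\log|v|$ but does \emph{not} dominate $|v|^{q}\log|v|$ (it only gives $s^{q}\log s\le s^{1+q}/e$, which need not be integrable).
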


\begin{proof}
For every $x\in\Omega$ such that $\left\vert v(x)\right\vert \leq1$ the
function $q\in(0,1]\mapsto-\left\vert v(x)\right\vert ^{q}\log\left\vert
v(x)\right\vert \in\lbrack0,\infty]$ is decreasing and
\[
\lim_{q\rightarrow0^{+}}\left[  -\left\vert v(x)\right\vert ^{q}\log\left\vert
v(x)\right\vert \right]  =-\log\left\vert v(x)\right\vert .
\]
Therefore, it follows directly from Lebesgue's Monotone Convergence Theorem
that $\log\left\vert v\right\vert $ is Lebesgue measurable in the set
$\left\{  x\in\Omega:\left\vert v(x)\right\vert \leq1\right\}  $ and%
\[
-\infty\leq\lim_{q\rightarrow0^{+}}\int_{\left\{  \left\vert v\right\vert
\leq1\right\}  }\left\vert v\right\vert ^{q}\log\left\vert v\right\vert
\mathrm{d}x=\int_{\left\{  \left\vert v\right\vert \leq1\right\}  }%
\log\left\vert v\right\vert \mathrm{d}x\leq0.
\]

Now, for every $x\in\Omega$ such that $\left\vert v(x)\right\vert \geq1$ the
function $q\in(0,1]\mapsto\left\vert v(x)\right\vert ^{q}\log\left\vert
v(x)\right\vert \in\lbrack0,\infty]$ is increasing and
\[
\lim_{q\rightarrow0^{+}}\left\vert v(x)\right\vert ^{q}\log\left\vert
v(x)\right\vert =\log\left\vert v(x)\right\vert .
\]
Moreover, for every $q\in(0,\frac{1}{2})$ one has
\[
0\leq\left\vert v(x)\right\vert ^{q}\log\left\vert v(x)\right\vert
\leq\left\vert v(x)\right\vert ^{\frac{1}{2}}\log\left\vert v(x)\right\vert
\leq2e^{-1}\left\vert v(x)\right\vert ^{\frac{1}{2}}\left\vert v(x)\right\vert
^{\frac{1}{2}}=2e^{-1}\left\vert v(x)\right\vert \in L^{1}(\Omega),
\]
since $\max\limits_{t\geq1}t^{-\frac{1}{2}}\log t=2e^{-1}.$ Therefore,
Lebesgue's Dominated Convergence Theorem implies that $\log\left\vert
v\right\vert $ is integrable in the set $\left\{  x\in\Omega:\left\vert
v(x)\right\vert \geq1\right\}  $ and that%
\[
0\leq\lim_{q\rightarrow0^{+}}\int_{\left\{  \left\vert v\right\vert
\geq1\right\}  }\left\vert v\right\vert ^{q}\log\left\vert v\right\vert
\mathrm{d}x=\int_{\left\{  \left\vert v\right\vert \geq1\right\}  }%
\log\left\vert v\right\vert \mathrm{d}x\leq2e^{-1}\left\Vert v\right\Vert
_{1}.
\]

Thus, we have that $\log\left\vert v\right\vert $ is Lebesgue measurable in
$\Omega$ and
\begin{align*}
\lim_{q\rightarrow0^{+}}\int_{\Omega}\left\vert v\right\vert ^{q}%
\log\left\vert v\right\vert \mathrm{d}x  &  =\lim_{q\rightarrow0^{+}}%
\int_{\left\{  \left\vert v\right\vert \leq1\right\}  }\left\vert v\right\vert
^{q}\log\left\vert v\right\vert \mathrm{d}x+\lim_{q\rightarrow0^{+}}%
\int_{\left\{  \left\vert v\right\vert >1\right\}  }\left\vert v\right\vert
^{q}\log\left\vert v\right\vert \mathrm{d}x\\
&  =\int_{\left\{  \left\vert v\right\vert \leq1\right\}  }\log\left\vert
v\right\vert \mathrm{d}x+\int_{\left\{  \left\vert v\right\vert >1\right\}
}\log\left\vert v\right\vert \mathrm{d}x\\
&  =\int_{\Omega}\log\left\vert v\right\vert \mathrm{d}x\leq2e^{-1}\left\Vert
v\right\Vert _{1}.
\end{align*}

\end{proof}

\begin{proposition}
\label{intlog1}For $v\in L^{1}(\Omega)$ define
\[
\theta_{v}:=\lim_{q\rightarrow0^{+}}\left(  \frac{1}{\left\vert \Omega
\right\vert }\int_{\Omega}\left\vert v\right\vert ^{q}\mathrm{d}x\right)
^{\frac{1}{q}}\in\lbrack0,\infty)\;\mathrm{and}\;\beta_{v}:=\frac
{1}{\left\vert \Omega\right\vert }\int_{\Omega}\log\left\vert v\right\vert
\mathrm{d}x\in\lbrack-\infty,\infty).
\]
Then
\begin{equation}
\beta_{v}=\log\theta_{v}. \label{logl}%
\end{equation}

\end{proposition}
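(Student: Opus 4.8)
The claim is that $\theta_v = \exp(\beta_v)$, i.e. the limit of the normalized $L^q$-means equals the exponential of the logarithmic mean. The plan is to exploit the elementary inequality $\log t \le \frac{t^q - 1}{q}$ for $t>0$ and $q>0$ (with equality only at $t=1$), applied pointwise to $t = |v(x)|$, together with Proposition~\ref{into}, which already guarantees that $\log|v|$ is measurable and that $\int_\Omega |v|^q \log|v|\,\mathrm{d}x \to \int_\Omega \log|v|\,\mathrm{d}x$ as $q\to 0^+$ (possibly $-\infty$).

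First I would handle the case $\theta_v > 0$. Writing $m_q := \frac{1}{|\Omega|}\int_\Omega |v|^q\,\mathrm{d}x$, note that $m_q \to 1$ as $q\to 0^+$ is \emph{not} automatic, but $m_q^{1/q}\to\theta_v\in(0,\infty)$, so $\frac{1}{q}\log m_q \to \log\theta_v$. The idea is to squeeze $\frac{1}{q}\log m_q$ between two quantities both converging to $\beta_v$. For the upper bound, Jensen's inequality applied to the concave function $\log$ (or directly the inequality $\log s \le s - 1$) gives $\log m_q \le m_q - 1 = \frac{1}{|\Omega|}\int_\Omega(|v|^q-1)\,\mathrm{d}x$, and since $\frac{|v|^q-1}{q}\to \log|v|$ pointwise from above on $\{|v|\le 1\}$ and is dominated on $\{|v|\ge 1\}$ by an $L^1$ function (as in Proposition~\ref{into}), one gets $\limsup_{q\to 0^+}\frac{1}{q}\log m_q \le \beta_v$. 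For the lower bound, I would use $\log m_q \ge$ something like $\frac{1}{m_q}\cdot\frac{1}{|\Omega|}\int_\Omega |v|^q \log|v|\,\mathrm{d}x$ — this comes from the inequality $\log a - \log b \le \frac{a-b}{b}$ rearranged, or more cleanly from differentiating: $\frac{\mathrm{d}}{\mathrm{d}q}\log m_q = \frac{\int_\Omega |v|^q\log|v|\,\mathrm{d}x}{\int_\Omega |v|^q\,\mathrm{d}x}$, so $\log m_q = \int_0^q \frac{\int_\Omega |v|^s\log|v|\,\mathrm{d}x}{\int_\Omega |v|^s\,\mathrm{d}x}\,\mathrm{d}s$ (using $m_0 = 1$). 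Dividing by $q$ and letting $q\to 0^+$, the integrand tends to $\frac{|\Omega|\beta_v}{|\Omega|} = \beta_v$ by Proposition~\ref{into} and $m_s\to 1$, so $\frac{1}{q}\log m_q \to \beta_v$ directly — this is actually the cleanest route and sidesteps the squeeze. Thus $\log\theta_v = \beta_v$ when $\theta_v>0$.

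The remaining case is $\theta_v = 0$, where I must show $\beta_v = -\infty$. Here the monotonicity (recalled in Section~\ref{S2}) gives $m_q^{1/q}$ decreasing as $q\to 0^+$ with infimum $0$, so $\frac{1}{q}\log m_q \to -\infty$. Using the derivative formula $\log m_q = \int_0^q \frac{\int_\Omega|v|^s\log|v|\,\mathrm{d}x}{m_s|\Omega|}\,\mathrm{d}s$ again, if $\beta_v$ were finite then the integrand would converge to the finite value $\beta_v$ as $s\to 0^+$, forcing $\frac{1}{q}\log m_q$ to stay bounded — a contradiction. Hence $\beta_v = -\infty = \log\theta_v$.

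The main obstacle I anticipate is justifying the differentiation under the integral sign and the limiting behavior of $\frac{\int_\Omega |v|^s\log|v|\,\mathrm{d}x}{m_s|\Omega|}$ as $s\to 0^+$ when $\beta_v = -\infty$ (the numerator may diverge to $-\infty$); one must argue with care, perhaps truncating $v$ or working separately on $\{|v|\le 1\}$ and $\{|v|>1\}$ as Proposition~\ref{into} does, so that the dominated/monotone convergence used to get $\frac{\mathrm{d}}{\mathrm{d}q}\log m_q$ is valid and the convergence of the averaged integrand to $\beta_v$ (in the extended sense) is rigorous. An alternative that avoids differentiation entirely is the two-sided squeeze $\frac{1}{|\Omega|}\int_\Omega |v|^q\log|v|\,\mathrm{d}x \;\le\; m_q\log m_q \;+\;$ (remainder)$\;\le\; \frac{m_q}{|\Omega|}\int_\Omega |v|^q\log|v|\,\mathrm{d}x$ combined with $m_q\to$ (something) — but this seems more delicate than the integral-of-the-derivative approach, so I would present the latter.
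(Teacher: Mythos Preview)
Your proposal is correct and, once you settle on the ``cleanest route'' (writing $\log m_q=\int_0^q(\log m_s)'\,\mathrm{d}s$ and letting $q\to 0^+$), it is essentially the same argument as the paper's: the paper simply phrases this as a one-line application of L'H\^opital's rule to $\frac{\log m_q}{q}$, using Proposition~\ref{into} to identify the limit of the derivative $\frac{\mathrm{d}}{\mathrm{d}q}\log m_q=\frac{1}{m_q|\Omega|}\int_\Omega|v|^q\log|v|\,\mathrm{d}x$ as $\beta_v$. Your version is more explicit about the case split $\theta_v>0$ versus $\theta_v=0$ and about the need to justify differentiation under the integral sign (which the paper leaves implicit), but the underlying mechanism is identical.
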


\begin{proof}
It follows from L'H\^{o}pital's rule and Proposition \ref{into} that%
\[
\lim_{q\rightarrow0^{+}}\left(  \frac{1}{\left\vert \Omega\right\vert }%
\int_{\Omega}\left\vert v\right\vert ^{q}\mathrm{d}x\right)  ^{\frac{1}{q}%
}=\exp\left(  \frac{1}{\left\vert \Omega\right\vert }\lim_{q\rightarrow0^{+}%
}\int_{\Omega}\left\vert v\right\vert ^{q}\log\left\vert v\right\vert
\mathrm{d}x\right)  =\exp\left(  \frac{1}{\left\vert \Omega\right\vert }%
\int_{\Omega}\log\left\vert v\right\vert \mathrm{d}x\right)  .
\]
Hence, (\ref{logl}) follows.
\end{proof}

The following corollary is an immediate consequence of the previous proposition.

\begin{corollary}
\label{log0}Let $v\in W_{0}^{1,p}(\Omega).$ Then $v\in\mathcal{M}(\Omega)$ if,
and only if,
\[
\int_{\Omega}\log\left\vert v\right\vert \mathrm{d}x=0.
\]
In particular, $\int_{\Omega}\log\left\vert u\right\vert \mathrm{d}x=0,$ where
$u$ is the solution of (\ref{weakl0}).
\end{corollary}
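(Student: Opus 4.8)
The plan is to derive Corollary \ref{log0} as a direct application of Proposition \ref{intlog1}. First I would observe that for $v\in W_0^{1,p}(\Omega)$, the quantity $\theta_v:=\lim_{q\to 0^+}\bigl(\frac{1}{|\Omega|}\int_\Omega |v|^q\,\mathrm{d}x\bigr)^{1/q}$ exists in $[0,\infty)$ — this is guaranteed by the monotonicity argument at the beginning of Section \ref{S2} together with the bound $\theta_v\le \|v\|_1/|\Omega|$, which is finite since $W_0^{1,p}(\Omega)\hookrightarrow L^1(\Omega)$ on a bounded domain. By definition, $v\in\mathcal{M}(\Omega)$ means precisely $\theta_v=1$. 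Now Proposition \ref{intlog1} asserts $\beta_v=\log\theta_v$, where $\beta_v=\frac{1}{|\Omega|}\int_\Omega \log|v|\,\mathrm{d}x\in[-\infty,\infty)$. Therefore $\theta_v=1$ if and only if $\log\theta_v=0$ if and only if $\beta_v=0$, i.e. $\int_\Omega \log|v|\,\mathrm{d}x=0$. This establishes the equivalence.

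For the ``in particular'' clause, I would invoke Theorem \ref{theoexist}: the solution $u$ of (\ref{weakl0}) was shown there to belong to $\mathcal{M}(\Omega)$ (part of claim (b), where one first verifies $u\in\mathcal{M}(\Omega)$ before identifying $\|\nabla u\|_p^p$ with the minimum). Since $u>0$ in $\Omega$ we may drop the absolute value, and applying the equivalence just proved to $v=u$ gives $\int_\Omega \log u\,\mathrm{d}x=0$.

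There is essentially no obstacle here: the corollary is a formal restatement of Proposition \ref{intlog1}, whose own proof is the substantive step (it relies on Proposition \ref{into}, which handles the delicate measurability and integrability of $\log|v|$ via the split into $\{|v|\le 1\}$ and $\{|v|\ge 1\}$, together with monotone and dominated convergence, and on an application of L'H\^opital's rule in the exponent). The only minor point worth being careful about is that $\beta_v$ can a priori equal $-\infty$; but in that regime $\theta_v=e^{-\infty}=0\ne 1$, so such $v$ lie neither in $\mathcal{M}(\Omega)$ nor in the set $\{\int_\Omega\log|v|\,\mathrm{d}x=0\}$, and the equivalence remains valid. Thus the proof is a two-line deduction once Proposition \ref{intlog1} is in hand.
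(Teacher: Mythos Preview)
Your proposal is correct and matches the paper's approach exactly: the paper states that the corollary is an immediate consequence of Proposition \ref{intlog1}, and you have spelled out precisely that deduction (including the care with the $\beta_v=-\infty$ case and the appeal to Theorem \ref{theoexist} for the ``in particular'' clause).
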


\subsection{Minimizing the energy functional}

In this subsection, $u$ denotes the solution of (\ref{weakl0}). As we have
shown, $u$ minimizes the functional $v\mapsto\left\Vert \nabla v\right\Vert
_{p}^{p}$ on $\mathcal{M}(\Omega).$ Let us show that $u$ is the unique, up to
sign, with this property.

\begin{lemma}
\label{unicM0}Let $v\in\mathcal{M}(\Omega)$ such that $\mu(\Omega)=\left\Vert
\nabla v\right\Vert _{p}^{p}.$ Then, $v$ does not change sign in $\Omega.$
\end{lemma}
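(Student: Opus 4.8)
The plan is to argue by contradiction: suppose $v\in\mathcal{M}(\Omega)$ achieves the minimum $\mu(\Omega)=\|\nabla v\|_p^p$ but changes sign. Write $v=v^+-v^-$ with $v^+,v^-\in W_0^{1,p}(\Omega)$ both nontrivial, and consider the function $w:=|v|=v^++v^-$. Since $|\nabla w|=|\nabla v|$ almost everywhere, we have $\|\nabla w\|_p^p=\|\nabla v\|_p^p=\mu(\Omega)$, and by Corollary \ref{log0} together with $\int_\Omega\log|v|\,\mathrm{d}x=0$ we get $w\in\mathcal{M}(\Omega)$ as well. So $w$ is a nonnegative minimizer, and the first step is to run through Theorem \ref{theoexist}'s argument (or invoke it) to conclude that $w$ solves the Euler--Lagrange equation $-\Delta_p w=\mu(\Omega)|\Omega|^{-1}w^{-1}$ in $\Omega$; by the uniqueness Proposition \ref{propuniq} applied to (\ref{weakl0}), $w=u$, which is strictly positive in $\Omega$ by claim (d) of Theorem \ref{theoexist}.

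Now I would exploit the strict positivity to derive the contradiction. Since $w=|v|=u>0$ a.e.\ in $\Omega$, the set where $v=0$ has measure zero, so $\Omega$ splits (up to null sets) into $\Omega^+=\{v>0\}$ and $\Omega^-=\{v<0\}$, both of positive measure. On $\Omega^+$ we have $v=u$ and on $\Omega^-$ we have $v=-u$; in other words $v=\sigma u$ where $\sigma$ is a $\{\pm1\}$-valued measurable function that is not a.e.\ constant. The key point is that for $v\in W_0^{1,p}(\Omega)$ this forces $u$ to vanish on a ``jump interface'' in a way incompatible with $v\in W_0^{1,p}(\Omega)$ and $u>0$: more precisely, $v^+=u\cdot\mathbf{1}_{\Omega^+}$ and $v^-=u\cdot\mathbf{1}_{\Omega^-}$ must themselves lie in $W_0^{1,p}(\Omega)$ (standard truncation), and since $u$ solves the singular equation one can test the weak formulation of (\ref{weakl0}) for $u$ against $v^+$ (a valid test function by the extension of (\ref{ann2})-type identities to all of $W_0^{1,p}$, already justified in the excerpt) to get $\int_\Omega|\nabla u|^{p-2}\nabla u\cdot\nabla v^+\,\mathrm{d}x=\mu(\Omega)|\Omega|^{-1}\int_\Omega u^{-1}v^+\,\mathrm{d}x=\mu(\Omega)|\Omega|^{-1}|\Omega^+|$. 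But $\nabla v^+=\nabla u$ on $\Omega^+$ and $\nabla v^+=0$ off $\Omega^+$, so the left side equals $\int_{\Omega^+}|\nabla u|^p\,\mathrm{d}x$; similarly $\int_{\Omega^-}|\nabla u|^p\,\mathrm{d}x=\mu(\Omega)|\Omega|^{-1}|\Omega^-|$. Adding, $\|\nabla u\|_p^p=\mu(\Omega)$, consistent — so I need a sharper comparison.

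The sharper route is the strong maximum principle. From claim (d), $u\ge A\mu(\Omega)^{1/p}\phi_p>0$ in $\Omega$ with $\phi_p$ the $p$-torsion function, and $-\Delta_p u=\mu(\Omega)|\Omega|^{-1}u^{-1}\ge c>0$ on compact subsets by Remark \ref{regu}. Hence $u$ is a positive $p$-superharmonic-type function bounded below by a positive constant on every compactly contained subdomain. Now if $v$ changed sign, then $v^+$ would be a nonnegative function in $W_0^{1,p}(\Omega)$, not identically zero, vanishing on the set $\Omega^-$ of positive measure, which is an \emph{open-up-to-null} set carved out of the interior; but $v^+=u>0$ precisely on $\Omega^+$ and $v^+=0$ on $\Omega^-$, forcing $u\to 0$ as one approaches $\partial\Omega^-\cap\Omega$ from inside $\Omega^+$, contradicting the uniform lower bound $u\ge A\mu(\Omega)^{1/p}\phi_p>0$ on any compact subset of $\Omega$ (in particular on a compact neighborhood of an interior boundary point of $\Omega^\pm$). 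I expect the main obstacle to be making the ``interior interface'' argument rigorous without extra regularity — the clean fix is to note $w=|v|=u\in C^{0,\alpha}(\overline\Omega)$ is continuous and strictly positive in the open set $\Omega$, so $\{v>0\}$ and $\{v<0\}$ would be two nonempty relatively closed-and-open subsets of the connected set $\{u>0\}=\Omega$ partitioning it (after discarding the null set $\{v=0\}$; continuity of $v$ off this null set needs the observation that $v/u=\sigma$ cannot be approximated in $W^{1,p}$ by continuous functions unless $\sigma$ is constant, i.e.\ using that $W_0^{1,p}(\Omega)\ni v$ has no interior jump set for $p\le N$, or directly that $v^\pm\in W_0^{1,p}$ and $v^+v^-=0$ a.e.\ with $|v^+|+|v^-|=u$ continuous and positive is impossible on a connected domain). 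This connectedness contradiction is the crux; everything else is bookkeeping with truncations and the already-established regularity and uniqueness.
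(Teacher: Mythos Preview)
Your proposal has a genuine gap at the step where you conclude that the nonnegative minimizer $w=|v|$ satisfies the Euler--Lagrange equation $-\Delta_p w=\mu(\Omega)|\Omega|^{-1}w^{-1}$. Theorem \ref{theoexist} does \emph{not} assert that every minimizer of $\|\nabla\cdot\|_p^p$ on $\mathcal{M}(\Omega)$ solves the singular equation; it constructs the specific function $u$ as the strong $W_0^{1,p}$-limit of the $u_q$'s and passes to the limit in the weak formulations (\ref{ann2}) to obtain the equation for \emph{that} $u$. There is no direct variational derivation of the Euler--Lagrange equation from the constrained minimum on $\mathcal{M}(\Omega)$ in the paper, and for good reason: $\mathcal{M}(\Omega)$ is defined by the nonsmooth constraint $\int_\Omega\log|v|\,\mathrm{d}x=0$, and the singular term prevents routine differentiation of the functional. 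So ``running through Theorem \ref{theoexist}'s argument'' does not apply to an arbitrary nonnegative minimizer $w$, and you cannot invoke Proposition \ref{propuniq} to identify $w$ with $u$.

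Even granting $w=u$, your Step 3 remains incomplete, as you yourself note: you only know $v\in W_0^{1,p}(\Omega)$, not that $v$ is continuous, so $\{v>0\}$ and $\{v<0\}$ are merely measurable sets, and the ``interior interface'' connectedness argument needs a nontrivial justification that $u\mathbf{1}_E\in W_0^{1,p}(\Omega)$ with $u>0$ continuous forces $|E|\in\{0,|\Omega|\}$. This can in fact be made rigorous, but it is not bookkeeping.

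The paper's proof avoids the Euler--Lagrange equation entirely and is purely variational: assuming $|\Omega_+|,|\Omega_-|>0$, it uses concavity of $t\mapsto t^{1/q}$ for $0<q<1$ on the decomposition $\frac{1}{|\Omega|}\int_\Omega|v|^q=\frac{a_+}{|\Omega_+|}\int_{\Omega_+}|v_+|^q+\frac{a_-}{|\Omega_-|}\int_{\Omega_-}|v_-|^q$ to show that the larger of the two limits $\theta_\pm$ is at least $1$, then observes that $\theta_+^{-1}v_+\in\mathcal{M}(\Omega)$ forces $\mu(\Omega)\le\|\nabla v_+\|_p^p\le\|\nabla v\|_p^p=\mu(\Omega)$, whence $\|\nabla v_-\|_p^p=0$, a contradiction. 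This is both shorter and logically prior to any identification of minimizers with $u$.
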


\begin{proof}
Let $\Omega_{+}:=\left\{  x\in\Omega:v(x)>0\right\}  ,$ $\Omega_{-}=\left\{
x\in\Omega:v(x)<0\right\}  ,$ $a_{+}:=\frac{\left\vert \Omega_{+}\right\vert
}{\left\vert \Omega\right\vert }$ and $a_{-}:=\frac{\left\vert \Omega
_{-}\right\vert }{\left\vert \Omega\right\vert }.$ For $0<q<1,$ we have%
\begin{align*}
\left(  \frac{1}{\left\vert \Omega\right\vert }\int_{\Omega}\left\vert
v\right\vert ^{q}\mathrm{d}x\right)  ^{\frac{1}{q}}  &  =\left(  \frac{a_{+}%
}{\left\vert \Omega_{+}\right\vert }\int_{\Omega_{+}}\left\vert v_{+}%
\right\vert ^{q}\mathrm{d}x+\frac{a_{-}}{\left\vert \Omega_{-}\right\vert
}\int_{\Omega_{-}}\left\vert v_{-}\right\vert ^{q}\mathrm{d}x\right)
^{\frac{1}{q}}\\
&  \leq a_{+}\left(  \frac{1}{\left\vert \Omega_{+}\right\vert }\int
_{\Omega_{+}}\left\vert v_{+}\right\vert ^{q}\mathrm{d}x\right)  ^{\frac{1}%
{q}}+a_{-}\left(  \frac{1}{\left\vert \Omega_{-}\right\vert }\int_{\Omega_{-}%
}\left\vert v_{-}\right\vert ^{q}\mathrm{d}x\right)  ^{\frac{1}{q}}.
\end{align*}

It follows that%
\begin{align*}
1 &  =\lim_{q\rightarrow0^{+}}\left(  \frac{1}{\left\vert \Omega\right\vert
}\int_{\Omega}\left\vert v\right\vert ^{q}\mathrm{d}x\right)  ^{\frac{1}{q}}\\
&  \leq(a_{+})\lim_{q\rightarrow0^{+}}\left(  \frac{1}{\left\vert \Omega
_{+}\right\vert }\int_{\Omega_{+}}\left\vert v_{+}\right\vert ^{q}%
\mathrm{d}x\right)  ^{\frac{1}{q}}+(a_{-})\lim_{q\rightarrow0^{+}}\left(
\frac{1}{\left\vert \Omega_{-}\right\vert }\int_{\Omega_{-}}\left\vert
v_{-}\right\vert ^{q}\mathrm{d}x\right)  ^{\frac{1}{q}}.
\end{align*}

Let us suppose, by contradiction, that both $\left\vert \Omega_{+}\right\vert
$ and $\left\vert \Omega_{-}\right\vert $ are positive. Without loss of
generality, we assume that
\[
\lim_{q\rightarrow0^{+}}\left(  \frac{1}{\left\vert \Omega_{-}\right\vert
}\int_{\Omega_{-}}\left\vert v_{-}\right\vert ^{q}\mathrm{d}x\right)
^{\frac{1}{q}}\leq\theta_{+}:=\lim_{q\rightarrow0^{+}}\left(  \frac
{1}{\left\vert \Omega_{+}\right\vert }\int_{\Omega_{+}}\left\vert
v_{+}\right\vert ^{q}\mathrm{d}x\right)  ^{\frac{1}{q}}.
\]
Then%
\[
1\leq(a_{+}+a_{-})\lim_{q\rightarrow0^{+}}\left(  \frac{1}{\left\vert
\Omega_{+}\right\vert }\int_{\Omega_{+}}\left\vert v_{+}\right\vert
^{q}\mathrm{d}x\right)  ^{\frac{1}{q}}=\theta_{+}.
\]

It follows that $\theta_{+}^{-1}v_{+}\in\mathcal{M}(\Omega)$ and%
\[
\mu(\Omega)\leq\left\Vert \nabla(\theta_{+}^{-1}v_{+})\right\Vert _{p}^{p}%
\leq\left\Vert \nabla(v_{+})\right\Vert _{p}^{p}\leq\left\Vert \nabla
v\right\Vert _{p}^{p}=\mu(\Omega).
\]
Thus,
\[
\mu(\Omega)=\left\Vert \nabla v_{+}\right\Vert _{p}^{p}=\left\Vert \nabla
v\right\Vert _{p}^{p}=\left\Vert \nabla v_{+}\right\Vert _{p}^{p}+\left\Vert
\nabla v_{-}\right\Vert _{p}^{p},
\]
implying that $\left\Vert \nabla v_{-}\right\Vert _{p}^{p}=0$ and, therefore,
that $v_{-}=0$ $\mathrm{a.e}$\textrm{.} in $\Omega.$ This implies that
$\left\vert \Omega_{-}\right\vert =0,$ which is a contradiction.
\end{proof}

\begin{theorem}
\label{unicM}Let $v\in\mathcal{M}(\Omega)$ such that $\mu(\Omega)=\left\Vert
\nabla v\right\Vert _{p}^{p}.$ Then, $v=\pm u$ $\mathrm{a.e}$\textrm{.} in
$\Omega.$
\end{theorem}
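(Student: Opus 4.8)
The plan is to avoid any appeal to the (delicate) Euler--Lagrange equation for the constrained minimization and instead extract the rigidity from the equality case of the arithmetic--geometric mean inequality, using only the minimality of $u$ on $\mathcal{M}(\Omega)$ together with the logarithmic-integral machinery of Propositions~\ref{into} and~\ref{intlog1}.

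First I would reduce to the case $v\ge 0$. By Lemma~\ref{unicM0} the minimizer $v$ has constant sign, and replacing $v$ by $|v|$ alters neither $\|\nabla v\|_{p}$ nor $\int_{\Omega}\log|v|\,\mathrm{d}x$, hence keeps $v\in\mathcal{M}(\Omega)$ by Corollary~\ref{log0}; so I may assume $v\ge 0$ a.e. Then in fact $v>0$ a.e.\ in $\Omega$, since otherwise $\int_{\Omega}\log v\,\mathrm{d}x=-\infty$, contradicting Corollary~\ref{log0}. On the other hand, the solution $u$ of (\ref{weakl0}) is positive in $\Omega$, belongs to $\mathcal{M}(\Omega)$ by Corollary~\ref{log0}, and satisfies $\|\nabla u\|_{p}^{p}=\mu(\Omega)$ by Theorem~\ref{theoexist}(b). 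It therefore suffices to prove $v=u$ a.e.

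Next I would test the minimum with the average $w:=\tfrac12(u+v)\in W_{0}^{1,p}(\Omega)$, which is positive a.e. Since $\int_{\Omega}\log u\,\mathrm{d}x=\int_{\Omega}\log v\,\mathrm{d}x=0$ are finite (so that $\log u,\log v\in L^{1}(\Omega)$, by Proposition~\ref{into}), the pointwise AM--GM bound $\log w\ge\tfrac12(\log u+\log v)$ shows, again via Proposition~\ref{into}, that $\int_{\Omega}\log w\,\mathrm{d}x$ is a finite number $\ge 0$; hence by Proposition~\ref{intlog1} the constant $\theta_{w}:=\exp\!\left(\tfrac{1}{|\Omega|}\int_{\Omega}\log w\,\mathrm{d}x\right)$ lies in $[1,\infty)$ and $\theta_{w}^{-1}w\in\mathcal{M}(\Omega)$. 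Then minimality of $u$ and the triangle inequality in $L^{p}$ give
\[
\mu(\Omega)\le\|\nabla(\theta_{w}^{-1}w)\|_{p}^{p}=\theta_{w}^{-p}\|\nabla w\|_{p}^{p}\le\theta_{w}^{-p}\left(\tfrac12\|\nabla u\|_{p}+\tfrac12\|\nabla v\|_{p}\right)^{p}=\theta_{w}^{-p}\mu(\Omega),
\]
so $\theta_{w}\le 1$, whence $\theta_{w}=1$ and $\int_{\Omega}\log w\,\mathrm{d}x=0=\tfrac12\int_{\Omega}(\log u+\log v)\,\mathrm{d}x$. Consequently the nonnegative function $\log w-\tfrac12(\log u+\log v)$ has zero integral over $\Omega$ and thus vanishes a.e.; the equality case of AM--GM then forces $u=v$ a.e.\ in $\Omega$, and undoing the initial sign reduction yields $v=\pm u$ a.e., as claimed.

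The only delicate point is the bookkeeping of the logarithmic integrals --- namely that $\int_{\Omega}\log u\,\mathrm{d}x$, $\int_{\Omega}\log v\,\mathrm{d}x$ and $\int_{\Omega}\log w\,\mathrm{d}x$ are all well defined in $[-\infty,\infty)$ and that the first two being equal to $0$ forces $\log u,\log v\in L^{1}(\Omega)$, so that the lower bound $\int_{\Omega}\log w\,\mathrm{d}x\ge 0$ is legitimate --- and this is exactly what Propositions~\ref{into} and~\ref{intlog1} and Corollary~\ref{log0} are tailored to provide. It is worth noting that strict convexity of the $L^{p}$-norm is \emph{not} needed here: the rigidity is produced entirely by the equality case of AM--GM. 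A more computational alternative would be to derive, from a multiplicative variation $v_{t}:=k_{t}\,v\,e^{t\varphi}$ (with $k_{t}$ chosen so that $v_{t}\in\mathcal{M}(\Omega)$), that $v$ weakly solves $-\Delta_{p}v=\mu(\Omega)|\Omega|^{-1}v^{-1}$ and then invoke the uniqueness Proposition~\ref{propuniq}; there the obstacle is to propagate the resulting identity from test functions of the special form $v\varphi$ to all of $W_{0}^{1,p}(\Omega)$, which is less transparent than the argument above.
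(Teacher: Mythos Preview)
Your argument is correct and shares the same overall architecture as the paper's proof: reduce to $v\ge 0$ via Lemma~\ref{unicM0}, test the minimum with the midpoint $w=\tfrac12(u+v)$, sandwich the normalization constant $\theta_w$ between $1$ and $1$, and then read off rigidity. The difference lies in the two places where an inequality and its equality case are invoked.

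For the lower bound $\theta_w\ge 1$, the paper uses the reverse Minkowski inequality in $L^q$ for $0<q<1$,
\[
\Bigl(\textstyle\int_\Omega\bigl(\tfrac{u+v}{2}\bigr)^q\,\mathrm{d}x\Bigr)^{1/q}\ \ge\ \Bigl(\textstyle\int_\Omega\bigl(\tfrac{u}{2}\bigr)^q\,\mathrm{d}x\Bigr)^{1/q}+\Bigl(\textstyle\int_\Omega\bigl(\tfrac{v}{2}\bigr)^q\,\mathrm{d}x\Bigr)^{1/q},
\]
and lets $q\to 0^+$, whereas you obtain it directly from AM--GM on $\log w\ge\tfrac12(\log u+\log v)$ together with Corollary~\ref{log0} and Proposition~\ref{intlog1}. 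For the rigidity, the paper arrives at equality in the $L^p$ triangle inequality for gradients, $\|\nabla(u+v)\|_p=\|\nabla u\|_p+\|\nabla v\|_p$, and concludes $u=v$ from the equality case of Minkowski (hence implicitly from strict convexity of the $L^p$ norm); you instead get $\int_\Omega\bigl(\log w-\tfrac12(\log u+\log v)\bigr)\,\mathrm{d}x=0$ with a nonnegative integrand, so equality in AM--GM forces $u=v$ pointwise. Your route is slightly more self-contained --- as you note, it avoids the equality case of Minkowski in $L^p$ --- and the bookkeeping of the logarithmic integrals that you flag is indeed handled by Propositions~\ref{into} and~\ref{intlog1}. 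The paper's route, on the other hand, stays closer to the $q$-parametrized formulation and does not need to pass through $\log u,\log v\in L^1(\Omega)$.
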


\begin{proof}
According Lemma \ref{unicM0}, we can assume, without loss of generality, that
$v\geq0$ \textrm{a.\thinspace e.} in $\Omega.$ Since $0<q<1,$ it is simple to
check that
\[
\left(  \int_{\Omega}\left(  \frac{u+v}{2}\right)  ^{q}\mathrm{d}x\right)
^{\frac{1}{q}}\geq\left(  \int_{\Omega}\left(  \frac{u}{2}\right)
^{q}\mathrm{d}x\right)  ^{\frac{1}{q}}+\left(  \int_{\Omega}\left(  \frac
{v}{2}\right)  ^{q}\mathrm{d}x\right)  ^{\frac{1}{q}}.
\]
Thus,%
\begin{equation}
h:=\lim_{q\rightarrow0^{+}}\left(  \frac{1}{\left\vert \Omega\right\vert }%
\int_{\Omega}\left(  \frac{u+v}{2}\right)  ^{q}\mathrm{d}x\right)  ^{\frac
{1}{q}}\geq\frac{1}{2}+\frac{1}{2}=1. \label{h+}%
\end{equation}
Of course, $\dfrac{u+v}{2h}\in\mathcal{M}(\Omega).$ Thus,
\begin{equation}
\mu(\Omega)\leq\left\Vert \nabla\left(  \dfrac{u+v}{2h}\right)  \right\Vert
_{p}^{p}\leq\frac{1}{(2h)^{p}}\left(  \left\Vert \nabla u\right\Vert
_{p}+\left\Vert \nabla v\right\Vert _{p}\right)  ^{p}=\frac{\left(
2\mu(\Omega)^{\frac{1}{p}}\right)  ^{p}}{(2h)^{p}}=\frac{\mu(\Omega)}{h^{p}},
\label{h-}%
\end{equation}
which implies that $h\leq1.$ Therefore, (\ref{h+}) and (\ref{h-}) imply that
$h=1,$ and this fact in (\ref{h-}) yields%
\[
\left\Vert \nabla(u+v)\right\Vert _{p}=\left\Vert \nabla u\right\Vert
_{p}+\left\Vert \nabla v\right\Vert _{p}.
\]
We conclude from this equality that $u=v$ \textrm{a.\thinspace e.} in
$\Omega.$
\end{proof}

The next corollary shows that $\mu(\Omega)^{-1}$ is the best constant $C$ in
the following log-Sobolev type inequality%
\begin{equation}
\exp\left(  \frac{1}{\left\vert \Omega\right\vert }\int_{\Omega}\log\left\vert
v\right\vert ^{p}\mathrm{d}x\right)  \leq C\left\Vert \nabla v\right\Vert
_{p}^{p},\quad v\in W_{0}^{1,p}(\Omega)\label{logsob}%
\end{equation}
and that when $C=\mu(\Omega)^{-1}$ this inequality becomes an equality if, and
only if, $v$ is a scalar multiple of $u.$

\begin{corollary}
One has%
\[
\mu(\Omega)=\min\left\{  \frac{\left\Vert \nabla v\right\Vert _{p}^{p}}%
{\exp\left(  \frac{1}{\left\vert \Omega\right\vert }\int_{\Omega}%
\log\left\vert v\right\vert ^{p}\mathrm{d}x\right)  }:v\in W_{0}^{1,p}%
(\Omega)\;\mathrm{and}\;\int_{\Omega}\log\left\vert v\right\vert
^{p}\mathrm{d}x>-\infty\right\}  .
\]
Moreover, the minimum is reached only by scalar multiples of $u.$
\end{corollary}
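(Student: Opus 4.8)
The plan is to reduce the corollary to Theorem~\ref{theoexist}(b) and Theorem~\ref{unicM} by exploiting homogeneity together with the identity relating the exponential average to $\theta_v$. First I would record that for every $v\in W_0^{1,p}(\Omega)$ one has $\int_\Omega\log\left\vert v\right\vert^p\mathrm{d}x=p\int_\Omega\log\left\vert v\right\vert\mathrm{d}x$, so that, in the notation of Proposition~\ref{intlog1},
\[
\exp\!\left(\frac{1}{\left\vert \Omega\right\vert}\int_\Omega\log\left\vert v\right\vert^p\mathrm{d}x\right)=\exp(p\beta_v)=\theta_v^{\,p}.
\]
By Proposition~\ref{into}, $W_0^{1,p}(\Omega)\subset L^1(\Omega)$ on the bounded domain $\Omega$ and $\int_\Omega\log\left\vert v\right\vert\mathrm{d}x\leq2e^{-1}\left\Vert v\right\Vert_1<\infty$; combined with Proposition~\ref{intlog1} this gives $\theta_v\in[0,\infty)$, with $\theta_v>0$ precisely when $\int_\Omega\log\left\vert v\right\vert^p\mathrm{d}x>-\infty$, i.e.\ exactly on the admissible set of the infimum. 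Thus the quotient in the statement equals $\left\Vert \nabla v\right\Vert_p^p/\theta_v^{\,p}$.

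Next I would use that $v\mapsto\theta_v$ is even and positively homogeneous of degree one ($\theta_{cv}=\left\vert c\right\vert\theta_v$), which is immediate from the definition. Hence, for admissible $v$ (so $\theta_v\in(0,\infty)$), the function $w:=\theta_v^{-1}v$ satisfies $\theta_w=1$, i.e.\ $w\in\mathcal{M}(\Omega)$, and Theorem~\ref{theoexist}(b) yields
\[
\mu(\Omega)\leq\left\Vert \nabla w\right\Vert_p^p=\frac{\left\Vert \nabla v\right\Vert_p^p}{\theta_v^{\,p}},
\]
which is exactly the claimed lower bound. For attainment, take $v=u$: by Corollary~\ref{log0} one has $u\in\mathcal{M}(\Omega)$, so $\theta_u=1$ and the quotient equals $\left\Vert \nabla u\right\Vert_p^p=\mu(\Omega)$ by Theorem~\ref{theoexist}(b); in particular the infimum is a minimum.

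Finally, for the uniqueness-up-to-scaling claim I would note that equality $\left\Vert \nabla v\right\Vert_p^p/\theta_v^{\,p}=\mu(\Omega)$ forces $w=\theta_v^{-1}v\in\mathcal{M}(\Omega)$ to realize $\mu(\Omega)=\min\{\left\Vert \nabla z\right\Vert_p^p:z\in\mathcal{M}(\Omega)\}$, whence Theorem~\ref{unicM} gives $w=\pm u$ and therefore $v=\pm\theta_v u$, a scalar multiple of $u$. Conversely, every nonzero scalar multiple $cu$ is admissible, since $\theta_{cu}=\left\vert c\right\vert>0$, and produces quotient $\left\vert c\right\vert^p\mu(\Omega)/\left\vert c\right\vert^p=\mu(\Omega)$. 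There is essentially no analytic obstacle here: the only points requiring care are matching the admissibility condition $\int_\Omega\log\left\vert v\right\vert^p\mathrm{d}x>-\infty$ with $\theta_v>0$ and recording that $\theta_v<\infty$ always; all the substantive work was already done in establishing Theorems~\ref{theoexist} and~\ref{unicM}.
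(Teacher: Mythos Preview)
Your proposal is correct and follows essentially the same route as the paper: identify the quotient with $\left\Vert\nabla v\right\Vert_p^p/\theta_v^p$, normalize by $\theta_v$ to land in $\mathcal{M}(\Omega)$, and invoke Theorem~\ref{theoexist}(b) for the bound and Theorem~\ref{unicM} for uniqueness. The only cosmetic difference is that the paper obtains the lower bound by passing to the limit in the inequality $\lambda_q(\Omega)\left\vert\Omega\right\vert^{p/q}\leq\left\Vert\nabla v\right\Vert_p^p\big/\bigl(\tfrac{1}{\left\vert\Omega\right\vert}\int_\Omega\left\vert v\right\vert^q\,\mathrm{d}x\bigr)^{p/q}$ rather than citing the min-characterization over $\mathcal{M}(\Omega)$ directly, but this amounts to the same thing.
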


\begin{proof}
Let $v\in W_{0}^{1,p}(\Omega)$ be such that $\int_{\Omega}\log\left\vert
v\right\vert ^{p}\mathrm{d}x>-\infty.$ Since
\[
\lambda_{q}(\Omega)\left\vert \Omega\right\vert ^{\frac{p}{q}}\leq
\frac{\left\Vert \nabla v\right\Vert _{p}^{p}}{\left(  \frac{1}{\left\vert
\Omega\right\vert }\int_{\Omega}\left\vert v\right\vert ^{q}\mathrm{d}%
x\right)  ^{\frac{p}{q}}}%
\]
we obtain%
\[
\mu(\Omega)=\lim_{q\rightarrow0^{+}}\lambda_{q}(\Omega)\left\vert
\Omega\right\vert ^{\frac{p}{q}}\leq\frac{\left\Vert \nabla v\right\Vert
_{p}^{p}}{\lim\limits_{q\rightarrow0^{+}}\left(  \frac{1}{\left\vert
\Omega\right\vert }\int_{\Omega}\left\vert v\right\vert ^{q}\mathrm{d}%
x\right)  ^{\frac{p}{q}}}=\frac{\left\Vert \nabla v\right\Vert _{p}^{p}}%
{\exp\left(  \frac{1}{\left\vert \Omega\right\vert }\int_{\Omega}%
\log\left\vert v\right\vert ^{p}\mathrm{d}x\right)  }.
\]

Since $\int_{\Omega}\log\left\vert u\right\vert \mathrm{d}x=0,$ we have%
\[
\frac{\left\Vert \nabla u\right\Vert _{p}^{p}}{\exp\left(  \frac{1}{\left\vert
\Omega\right\vert }\int_{\Omega}\log\left\vert u\right\vert ^{p}%
\mathrm{d}x\right)  }=\left\Vert \nabla u\right\Vert _{p}^{p}=\mu(\Omega).
\]

Of course, if $v\in W_{0}^{1,p}(\Omega)$ is such that
\[
\frac{\left\Vert \nabla v\right\Vert _{p}^{p}}{\exp\left(  \frac{1}{\left\vert
\Omega\right\vert }\int_{\Omega}\log\left\vert v\right\vert ^{p}%
\mathrm{d}x\right)  }=\mu(\Omega)
\]
then $\theta_{v}^{-1}v\in\mathcal{M}(\Omega),$ where
\[
\theta_{v}=\lim\limits_{q\rightarrow0^{+}}\left(  \frac{1}{\left\vert
\Omega\right\vert }\int_{\Omega}\left\vert v\right\vert ^{q}\mathrm{d}%
x\right)  ^{\frac{1}{q}}=\exp\left(  \frac{1}{\left\vert \Omega\right\vert
}\int_{\Omega}\log\left\vert v\right\vert ^{p}\mathrm{d}x\right)  >0,
\]
and $\left\Vert \nabla(\theta_{v}^{-1}v)\right\Vert _{p}^{p}=\mu(\Omega).$
Hence, $\theta_{v}^{-1}v=\pm u,$ implying that $v$ is a scalar multiple of
$u.$
\end{proof}

\begin{remark}
Gathering (\ref{lowbmu2}) and (\ref{logsob}), with $C=\mu(\Omega)^{-1},$ we
obtain%
\[
\exp\left(  \frac{1}{\left\vert \Omega\right\vert }\int_{\Omega}\log\left\vert
v\right\vert ^{p}\mathrm{d}x\right)  \leq C_{N,p,\left\vert \Omega\right\vert
}\left\Vert \nabla v\right\Vert _{p}^{p},\quad v\in W_{0}^{1,p}(\Omega),
\]
where $C_{N,p,\left\vert \Omega\right\vert }:=N^{-1}\left(  N+\frac{p}%
{p-1}\right)  ^{1-p}(\omega_{N})^{-\frac{p}{N}}\left\vert \Omega\right\vert
^{\frac{p}{N}-1}.$
\end{remark}

Now, let us define $J:W_{0}^{1,p}(\Omega)\mapsto(-\infty,\infty],$ the formal
energy functional, by%
\[
J(v):=\left\{
\begin{array}
[c]{ll}%
\dfrac{1}{p}%
{\displaystyle\int_{\Omega}}
\left\vert \nabla v\right\vert ^{p}\mathrm{d}x-\dfrac{\mu(\Omega)}{\left\vert
\Omega\right\vert }%
{\displaystyle\int_{\Omega}}
\log\left\vert v\right\vert \mathrm{d}x, & \mathrm{if}\;%
{\displaystyle\int_{\Omega}}
\log\left\vert v\right\vert \mathrm{d}x\in(-\infty,\infty)\\
\infty, & \mathrm{if}\;%
{\displaystyle\int_{\Omega}}
\log\left\vert v\right\vert \mathrm{d}x=-\infty.
\end{array}
\right.
\]

We are going to show that $u$ is the unique minimizer of $J$ in $W_{0}%
^{1,p}(\Omega).$

\begin{lemma}
\label{intlog2}One has%
\begin{equation}
J(v)\geq\frac{\mu(\Omega)}{p}-\frac{\mu(\Omega)}{p}\log\left(  \frac
{\mu(\Omega)}{\left\Vert \nabla v\right\Vert _{p}^{p}}\right)  -\frac
{\mu(\Omega)}{\left\vert \Omega\right\vert }\int_{\Omega}\log\left\vert
v\right\vert \mathrm{d}x,\quad\mathrm{for}\,\mathrm{all}\;v\in W_{0}%
^{1,p}(\Omega). \label{Jmin0}%
\end{equation}

\end{lemma}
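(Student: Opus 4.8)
The plan is to exploit the log-Sobolev inequality (\ref{logsob}) with the optimal constant $C=\mu(\Omega)^{-1}$, established in the preceding corollary, and convert it into a lower bound on the logarithmic term that appears in $J$. First I would restrict attention to those $v\in W_{0}^{1,p}(\Omega)$ with $\int_{\Omega}\log|v|\,\mathrm{d}x\in(-\infty,\infty)$, since otherwise $J(v)=\infty$ and the asserted inequality holds trivially (its right-hand side being finite whenever $\|\nabla v\|_{p}>0$; the degenerate case $\|\nabla v\|_p=0$ forces $v=0$, for which $\int_\Omega \log|v|\,\mathrm dx=-\infty$, so that case is also covered). For admissible $v$, the inequality (\ref{logsob}) with $C=\mu(\Omega)^{-1}$ reads
\[
\frac{p}{\left\vert \Omega\right\vert }\int_{\Omega}\log\left\vert v\right\vert \mathrm{d}x\leq\log\left(  \frac{\left\Vert \nabla v\right\Vert _{p}^{p}}{\mu(\Omega)}\right)  ,
\]
which I will rewrite, after multiplying by $\mu(\Omega)/p$, as a usable lower bound for $-\tfrac{\mu(\Omega)}{|\Omega|}\int_\Omega \log|v|\,\mathrm dx$ in terms of $-\tfrac{\mu(\Omega)}{p}\log\!\big(\mu(\Omega)/\|\nabla v\|_p^p\big)$.

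Next I would isolate the single-variable elementary inequality that does the remaining work. Writing $X:=\tfrac1p\|\nabla v\|_p^p>0$ and $a:=\tfrac{\mu(\Omega)}{p}>0$, the claim (\ref{Jmin0}) amounts, after substituting the above bound, to showing
\[
X\geq a - a\log\!\left(\frac{a}{X}\right),
\]
i.e. $X\geq a + a\log X - a\log a$, which is just the standard convexity inequality $t-1\geq \log t$ applied to $t=X/a$ (equivalently, $X/a-1\geq\log(X/a)$, with equality iff $X=a$). So the second key step is simply recording this convexity fact and plugging in; I expect this to be entirely routine.

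The only mildly delicate point — and the place I would be most careful — is the bookkeeping of the two different normalizations of the logarithm appearing in the problem: the log-Sobolev inequality is stated with $\log|v|^p$ inside the integral, while $J$ uses $\log|v|$, and one must track the factor $p$ and the factor $1/|\Omega|$ correctly so that the coefficient $\mu(\Omega)/p$ of the $\log\big(\mu(\Omega)/\|\nabla v\|_p^p\big)$ term and the coefficient $\mu(\Omega)/|\Omega|$ of the $\int_\Omega\log|v|\,\mathrm dx$ term both come out exactly as in (\ref{Jmin0}). Once the substitution is set up consistently, chaining the log-Sobolev bound with the convexity inequality $X\geq a+a\log(X/a)$ yields (\ref{Jmin0}) directly, with equality precisely when $\tfrac1p\|\nabla v\|_p^p=\tfrac{\mu(\Omega)}{p}$, i.e. $\|\nabla v\|_p^p=\mu(\Omega)$ — a characterization that will feed into the subsequent identification of the minimizer $u$.
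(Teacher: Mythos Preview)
Your argument is correct, but you are working harder than necessary. Notice that the term $-\dfrac{\mu(\Omega)}{|\Omega|}\int_{\Omega}\log|v|\,\mathrm{d}x$ appears \emph{on both sides} of (\ref{Jmin0}); once $\int_\Omega\log|v|\,\mathrm{d}x$ is finite it simply cancels, and what remains is exactly your convexity inequality $X\ge a+a\log(X/a)$ with $X=\tfrac1p\|\nabla v\|_p^p$ and $a=\tfrac{\mu(\Omega)}{p}$. The log-Sobolev inequality therefore plays no role in the proof of this lemma: it is used later, in Theorem~\ref{intlog3}, to pass from (\ref{Jmin0}) to the clean lower bound $J(v)\ge\mu(\Omega)/p$. (Also, a small slip: when $\int_\Omega\log|v|\,\mathrm{d}x=-\infty$ the right-hand side of (\ref{Jmin0}) is $+\infty$, not finite; but since $J(v)=+\infty$ too, the inequality is still trivially satisfied.)

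The paper's proof is the same one-variable fact, packaged more transparently: it observes that $J(tv)=g(|t|)$ with
\[
g(t)=\frac{t^{p}}{p}\|\nabla v\|_{p}^{p}-\mu(\Omega)\log t-\frac{\mu(\Omega)}{|\Omega|}\int_{\Omega}\log|v|\,\mathrm{d}x,
\]
minimizes $g$ over $t>0$ (the minimum, at $t^{p}=\mu(\Omega)/\|\nabla v\|_p^{p}$, is precisely the right-hand side of (\ref{Jmin0})), and concludes from $J(v)=g(1)\ge\min_{t>0}g(t)$. This scaling viewpoint and your convexity inequality are equivalent---$g(1)\ge g(t_{\min})$ \emph{is} the inequality $t-1\ge\log t$ with $t=X/a$---so once you drop the superfluous appeal to log-Sobolev, the two proofs coincide.
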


\begin{proof}
Let $v\in W_{0}^{1,p}(\Omega)$ and consider the function%
\[
g(t):=\frac{t^{p}}{p}\left\Vert \nabla v\right\Vert _{p}^{p}-\mu(\Omega)\log
t-\frac{\mu(\Omega)}{\left\vert \Omega\right\vert }\int_{\Omega}\log\left\vert
v\right\vert \mathrm{d}x,\quad t>0.
\]
It is easy to check that $J(tv)=g(\left\vert t\right\vert )$ and that
$\min_{t>0}g(t)$ is the right-hand side of (\ref{Jmin0}). The result then
follows, since $J(v)=g(1)\geq\min_{t>0}g(t).$
\end{proof}

\begin{theorem}
\label{intlog3}One has
\[
J(u)=\frac{\mu(\Omega)}{p}=\min\limits_{v\in W_{0}^{1,p}(\Omega)}J(v).
\]

\end{theorem}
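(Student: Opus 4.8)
The plan is to combine the lower bound from Lemma~\ref{intlog2} with the fact, already established, that $u\in\mathcal{M}(\Omega)$, that $\left\Vert\nabla u\right\Vert_p^p=\mu(\Omega)$, and that $\int_{\Omega}\log u\,\mathrm{d}x=0$ (Corollary~\ref{log0}).

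\begin{proof}
First I compute $J(u)$ directly. Since $u$ satisfies $\left\Vert\nabla u\right\Vert_p^p=\mu(\Omega)$ by Theorem~\ref{theoexist}(b) and $\int_{\Omega}\log\left\vert u\right\vert\mathrm{d}x=\int_{\Omega}\log u\,\mathrm{d}x=0$ by Corollary~\ref{log0}, the definition of $J$ gives
\[
J(u)=\frac{1}{p}\int_{\Omega}\left\vert\nabla u\right\vert^p\mathrm{d}x-\frac{\mu(\Omega)}{\left\vert\Omega\right\vert}\int_{\Omega}\log u\,\mathrm{d}x=\frac{\mu(\Omega)}{p}-0=\frac{\mu(\Omega)}{p}.
\]
Thus it remains to show that $J(v)\geq\frac{\mu(\Omega)}{p}$ for every $v\in W_0^{1,p}(\Omega)$. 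If $\int_{\Omega}\log\left\vert v\right\vert\mathrm{d}x=-\infty$ then $J(v)=\infty$ and there is nothing to prove, so we may assume $\int_{\Omega}\log\left\vert v\right\vert\mathrm{d}x\in(-\infty,\infty)$; in particular $v$ is not identically zero, so $\left\Vert\nabla v\right\Vert_p^p>0$ and the right-hand side of (\ref{Jmin0}) is well defined.

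By Lemma~\ref{intlog2},
\[
J(v)\geq\frac{\mu(\Omega)}{p}-\frac{\mu(\Omega)}{p}\log\left(\frac{\mu(\Omega)}{\left\Vert\nabla v\right\Vert_p^p}\right)-\frac{\mu(\Omega)}{\left\vert\Omega\right\vert}\int_{\Omega}\log\left\vert v\right\vert\mathrm{d}x.
\]
Now I invoke the log-Sobolev inequality (\ref{logsob}) with $C=\mu(\Omega)^{-1}$, established in the Corollary preceding this theorem: for every $w\in W_0^{1,p}(\Omega)$,
\[
\exp\left(\frac{1}{\left\vert\Omega\right\vert}\int_{\Omega}\log\left\vert w\right\vert^p\mathrm{d}x\right)\leq\mu(\Omega)^{-1}\left\Vert\nabla w\right\Vert_p^p,
\]
that is, $\frac{p}{\left\vert\Omega\right\vert}\int_{\Omega}\log\left\vert w\right\vert\mathrm{d}x\leq\log\left(\left\Vert\nabla w\right\Vert_p^p/\mu(\Omega)\right)=-\log\left(\mu(\Omega)/\left\Vert\nabla w\right\Vert_p^p\right)$. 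Applying this with $w=v$ and multiplying by $\mu(\Omega)/p$, we get
\[
\frac{\mu(\Omega)}{\left\vert\Omega\right\vert}\int_{\Omega}\log\left\vert v\right\vert\mathrm{d}x\leq-\frac{\mu(\Omega)}{p}\log\left(\frac{\mu(\Omega)}{\left\Vert\nabla v\right\Vert_p^p}\right),
\]
hence $-\frac{\mu(\Omega)}{p}\log\left(\frac{\mu(\Omega)}{\left\Vert\nabla v\right\Vert_p^p}\right)-\frac{\mu(\Omega)}{\left\vert\Omega\right\vert}\int_{\Omega}\log\left\vert v\right\vert\mathrm{d}x\geq0$. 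Substituting this into the lower bound for $J(v)$ yields $J(v)\geq\frac{\mu(\Omega)}{p}$, as desired. Combining this with $J(u)=\frac{\mu(\Omega)}{p}$ gives
\[
J(u)=\frac{\mu(\Omega)}{p}=\min\limits_{v\in W_0^{1,p}(\Omega)}J(v). \qedhere
\]
\end{proof}

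The only point requiring care is confirming that the two auxiliary bounds are genuinely available and point the same way: Lemma~\ref{intlog2} bounds $J(v)$ below by a quantity involving $-\log(\mu(\Omega)/\left\Vert\nabla v\right\Vert_p^p)$ and the log-integral, while the log-Sobolev inequality controls precisely that log-integral by $\log(\left\Vert\nabla v\right\Vert_p^p/\mu(\Omega))$; once the signs are matched the remainder is immediate. (Alternatively, one can avoid Lemma~\ref{intlog2} altogether by optimizing $t\mapsto J(tv)$ directly and then invoking the log-Sobolev inequality, but the route above is the shortest given what has already been proved.) I expect no real obstacle here — this is essentially a bookkeeping argument assembling results already in hand.
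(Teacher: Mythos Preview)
Your proof is correct and follows essentially the same route as the paper: compute $J(u)=\mu(\Omega)/p$ from $\left\Vert\nabla u\right\Vert_p^p=\mu(\Omega)$ and $\int_\Omega\log u\,\mathrm{d}x=0$, then combine Lemma~\ref{intlog2} with the inequality $\mu(\Omega)\leq\left\Vert\nabla v\right\Vert_p^p\big/\exp\!\left(\frac{p}{|\Omega|}\int_\Omega\log|v|\,\mathrm{d}x\right)$. The only cosmetic difference is that the paper unpacks this last inequality via $\theta_v^{-1}v\in\mathcal{M}(\Omega)$ and the minimality of $\mu(\Omega)$, whereas you cite the packaged log-Sobolev corollary directly; the content is identical.
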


\begin{proof}
Since
\[
\mu(\Omega)=%
{\displaystyle\int_{\Omega}}
\left\vert \nabla u\right\vert ^{p}\mathrm{d}x\;\mathrm{and}\;%
{\displaystyle\int_{\Omega}}
\log\left\vert u\right\vert \mathrm{d}x=0
\]
we have $J(u)=\dfrac{\mu(\Omega)}{p}.$ Thus, we need only to prove that%
\begin{equation}
J(v)\geq\frac{\mu(\Omega)}{p},\quad\mathrm{for}\,\mathrm{all}\;v\in
W_{0}^{1,p}(\Omega).\label{Jmin1}%
\end{equation}

Let $v\in W_{0}^{1,p}(\Omega)$ and $\theta_{v}$ as in Proposition
\ref{intlog1}. If $\theta_{v}=0,$ then $J(v)=\infty$ and (\ref{Jmin1}) holds
trivially. If $\theta_{v}>0$ then $\theta_{v}^{-1}v\in\mathcal{M}(\Omega)$
and
\[
\frac{1}{\left\vert \Omega\right\vert }\int_{\Omega}\log\left\vert \theta
_{v}^{-1}v\right\vert \mathrm{d}x=0\;\mathrm{and}\;\mu(\Omega)\leq\left\Vert
\nabla(\theta_{v}^{-1}v)\right\Vert _{p}^{p}=\theta_{v}^{-p}\left\Vert \nabla
v\right\Vert _{p}^{p},
\]
implying, respectively, that
\[
\frac{1}{\left\vert \Omega\right\vert }\int_{\Omega}\log\left\vert
v\right\vert \mathrm{d}x=\log\theta_{v}%
\]
and
\begin{align*}
J(v)  &  \geq\frac{\mu(\Omega)}{p}-\frac{\mu(\Omega)}{p}\log\left(  \frac
{\mu(\Omega)}{\left\Vert \nabla v\right\Vert _{p}^{p}}\right)  -\mu
(\Omega)\log(\theta_{v})\\
&  \geq\frac{\mu(\Omega)}{p}-\frac{\mu(\Omega)}{p}\log\left(  \theta_{v}%
^{-p}\right)  -\mu(\Omega)\log(\theta_{v})=\frac{\mu(\Omega)}{p}.
\end{align*}

\end{proof}

\begin{theorem}
If $w\in W_{0}^{1,p}(\Omega)$ minimizes $J,$ then $w=\pm\theta_{w}u$
$\mathrm{a.e.}$ in $\Omega,$ where
\[
\theta_{w}:=\lim_{q\rightarrow0^{+}}\left(  \frac{1}{\left\vert \Omega
\right\vert }\int_{\Omega}\left\vert w\right\vert ^{q}\mathrm{d}x\right)
^{\frac{1}{q}}.
\]

\end{theorem}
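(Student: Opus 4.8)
The plan is to reduce the statement to the uniqueness result already established in Theorem \ref{unicM}. First I would observe that, since $w$ minimizes $J$, Theorem \ref{intlog3} gives $J(w)=\mu(\Omega)/p<\infty$; in particular $\int_{\Omega}\log|w|\,\mathrm{d}x$ is finite, so Proposition \ref{intlog1} shows that $\theta_{w}\in(0,\infty)$ and $\frac{1}{|\Omega|}\int_{\Omega}\log|w|\,\mathrm{d}x=\log\theta_{w}$. The normalization $\theta_{w}^{-1}w$ is therefore legitimate, and $\frac{1}{|\Omega|}\int_{\Omega}\log|\theta_{w}^{-1}w|\,\mathrm{d}x=0$, so Corollary \ref{log0} gives $\theta_{w}^{-1}w\in\mathcal{M}(\Omega)$. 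Hence it suffices to prove that $\theta_{w}^{-1}w$ attains the minimum in part (b) of Theorem \ref{theoexist}, that is, $\|\nabla(\theta_{w}^{-1}w)\|_{p}^{p}=\mu(\Omega)$: once this is known, Theorem \ref{unicM} forces $\theta_{w}^{-1}w=\pm u$ a.e., which is exactly the claim $w=\pm\theta_{w}u$.

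To obtain that norm identity I would run a squeeze argument built on the two ingredients already at hand. On one side, applying (\ref{g}) to $\theta_{w}^{-1}w\in\mathcal{M}(\Omega)$ yields $\mu(\Omega)\le\|\nabla(\theta_{w}^{-1}w)\|_{p}^{p}=\theta_{w}^{-p}\|\nabla w\|_{p}^{p}$, equivalently $\log\!\big(\mu(\Omega)/\|\nabla w\|_{p}^{p}\big)\le-p\log\theta_{w}$. On the other side, Lemma \ref{intlog2} applied to $v=w$, after replacing $\frac{1}{|\Omega|}\int_{\Omega}\log|w|\,\mathrm{d}x$ by $\log\theta_{w}$, gives $J(w)\ge\frac{\mu(\Omega)}{p}-\frac{\mu(\Omega)}{p}\log\!\big(\mu(\Omega)/\|\nabla w\|_{p}^{p}\big)-\mu(\Omega)\log\theta_{w}$, and by the previous inequality the right-hand side is $\ge\mu(\Omega)/p$. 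Since $J(w)=\mu(\Omega)/p$, every inequality in this chain must be an equality; in particular $\log\!\big(\mu(\Omega)/\|\nabla w\|_{p}^{p}\big)=-p\log\theta_{w}$, which rearranges precisely to $\|\nabla(\theta_{w}^{-1}w)\|_{p}^{p}=\mu(\Omega)$.

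The proof then concludes as indicated: $\theta_{w}^{-1}w\in\mathcal{M}(\Omega)$ realizes $\mu(\Omega)=\|\nabla(\theta_{w}^{-1}w)\|_{p}^{p}$, so Theorem \ref{unicM} gives $\theta_{w}^{-1}w=\pm u$, i.e.\ $w=\pm\theta_{w}u$ a.e.\ in $\Omega$. I do not expect a genuine obstacle here — the substantive content lives in Lemma \ref{intlog2} and Theorem \ref{unicM} — so the only care needed is the bookkeeping of logarithms and the verification that $\theta_{w}>0$ (finiteness of $J(w)$), which is what makes the normalization and all the manipulations valid. As a byproduct, equality in the convexity bound used in the proof of Lemma \ref{intlog2} forces the optimal dilation to be $t=1$, hence $\|\nabla w\|_{p}^{p}=\mu(\Omega)$ and $\theta_{w}=1$; but this refinement is not needed for the statement.
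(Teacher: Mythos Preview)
Your proposal is correct and follows essentially the same route as the paper: both arguments use Lemma~\ref{intlog2} together with the bound $\mu(\Omega)\le\|\nabla v\|_p^p$ for $v\in\mathcal{M}(\Omega)$ to squeeze $\|\nabla(\theta_w^{-1}w)\|_p^p=\mu(\Omega)$, and then invoke Theorem~\ref{unicM}. The only difference is presentational---you frame the squeeze explicitly before appealing to Lemma~\ref{intlog2}, whereas the paper first extracts the inequality $\tfrac{1}{p}\log(\mu(\Omega)/\|\nabla w\|_p^p)+\log\theta_w\ge0$ from Lemma~\ref{intlog2} and then combines it with the reverse bound; your final remark that equality in Lemma~\ref{intlog2} forces $\theta_w=1$ is a correct (and not stated in the paper) extra observation.
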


\begin{proof}
It follows from Lemma \ref{intlog2} that
\[
\dfrac{\mu(\Omega)}{p}=J(w)\geq\frac{\mu(\Omega)}{p}-\frac{\mu(\Omega)}{p}%
\log\left(  \frac{\mu(\Omega)}{\left\Vert \nabla w\right\Vert _{p}^{p}%
}\right)  -\frac{\mu(\Omega)}{\left\vert \Omega\right\vert }\int_{\Omega}%
\log\left\vert w\right\vert \mathrm{d}x,
\]
which implies that%
\begin{equation}
\frac{1}{p}\log\left(  \frac{\mu(\Omega)}{\left\Vert \nabla w\right\Vert
_{p}^{p}}\right)  +\frac{1}{\left\vert \Omega\right\vert }\int_{\Omega}%
\log\left\vert w\right\vert \mathrm{d}x\geq0. \label{Jmin2}%
\end{equation}

Of course, $\theta_{w}\in(0,\infty),$ so that $\theta_{w}^{-1}w\in
\mathcal{M}(\Omega).$ Thus, according Corollary \ref{log0},
\[
\frac{1}{\left\vert \Omega\right\vert }\int_{\Omega}\log\left\vert \theta
_{w}^{-1}w\right\vert \mathrm{d}x=0,
\]
so that%
\[
\log\theta_{w}=\frac{1}{\left\vert \Omega\right\vert }\int_{\Omega}%
\log\left\vert w\right\vert \mathrm{d}x.
\]
Hence, (\ref{Jmin2}) yields
\[
\frac{1}{p}\log\left(  \frac{\mu(\Omega)}{\left\Vert \nabla w\right\Vert
_{p}^{p}}\right)  +\log\theta_{w}=\log\left(  \frac{\mu(\Omega)}{\left\Vert
\nabla(\theta_{w}^{-1}w)\right\Vert _{p}^{p}}\right)  ^{\frac{1}{p}}\geq0,
\]
which is equivalent to $\mu(\Omega)\geq\left\Vert \nabla\theta_{w}%
^{-1}w\right\Vert _{p}^{p}.$

Since $\mu(\Omega)\leq\left\Vert \nabla\theta_{w}^{-1}w\right\Vert _{p}^{p}$
(recall that $\theta_{w}^{-1}w\in\mathcal{M}(\Omega)$), we conclude that
$\mu(\Omega)=\left\Vert \nabla(\theta_{w}^{-1}w)\right\Vert _{p}^{p}.$ So,
Theorem \ref{unicM} yields $\theta_{w}^{-1}w=\pm u.$
\end{proof}

We end this section by remarking that, for each $\lambda>0,$ a simple scaling
argument shows that the function
\[
u_{\lambda}:=\left(  \frac{\lambda\left\vert \Omega\right\vert }{\mu(\Omega
)}\right)  ^{\frac{1}{p}}u
\]
is the unique solution of the singular problem%
\[
\left\{
\begin{array}
[c]{ll}%
-\Delta_{p}v=\lambda v^{-1} & \mathrm{in\ }\Omega,\\
v>0 & \mathrm{in\ }\Omega,\\
v=0 & \mathrm{on\ }\partial\Omega.
\end{array}
\right.
\]
Moreover, since $\int_{\Omega}\log u\mathrm{d}x=0$ and $\mu(\Omega)=\left\Vert
\nabla u\right\Vert _{p}^{p},$ the equality in (\ref{logsob}) (with
$C=\mu(\Omega)^{-1}$) yields%
\[
\int_{\Omega}\log u_{\lambda}\mathrm{d}x=\frac{\left\vert \Omega\right\vert
}{p}\log\left(  \mu(\Omega)^{-1}\left(  \frac{\lambda\left\vert \Omega
\right\vert }{\mu(\Omega)}\right)  \left\Vert \nabla u\right\Vert _{p}%
^{p}\right)  =\frac{\left\vert \Omega\right\vert }{p}\log\left(  \frac
{\lambda\left\vert \Omega\right\vert }{\mu(\Omega)}\right)  .
\]

We also note that $u_{\lambda}$ and $-u_{\lambda}$ are the unique minimizers
of the functional
\[
J_{\lambda}(v):=\left\{
\begin{array}
[c]{ll}%
\dfrac{1}{p}%
{\displaystyle\int_{\Omega}}
\left\vert \nabla v\right\vert ^{p}\mathrm{d}x-\lambda%
{\displaystyle\int_{\Omega}}
\log\left\vert v\right\vert \mathrm{d}x, & \mathrm{if}\;%
{\displaystyle\int_{\Omega}}
\log\left\vert v\right\vert \mathrm{d}x\in(-\infty,\infty)\\
\infty, & \mathrm{if}\;%
{\displaystyle\int_{\Omega}}
\log\left\vert v\right\vert \mathrm{d}x=-\infty,
\end{array}
\right.
\]
being
\[
J_{\lambda}(\pm u_{\lambda})=\frac{\mu(\Omega)}{p}-\frac{\lambda\left\vert
\Omega\right\vert }{p}\log\left(  \frac{\lambda\left\vert \Omega\right\vert
}{\mu(\Omega)}\right)  .
\]

\subsection{Asymptotics for the pair $(\lambda_{q}(\Omega),\left\Vert
u_{q}\right\Vert _{\infty})$}

In this subsection we describe the asymptotic behavior of $\lambda_{q}%
(\Omega),$ as $q\rightarrow0^{+}.$ Of course, if $\left\vert \Omega\right\vert
=1$, then $\lim_{q\rightarrow0^{+}}\lambda_{q}(\Omega)=\mu(\Omega)$ and,
according items \textrm{(a)} and \textrm{(d)} of Theorem \ref{theoexist}%
\[
0<A\mu(\Omega)^{\frac{1}{p}}\left\Vert \phi_{p}\right\Vert _{\infty}\leq
\lim_{q\rightarrow0^{+}}\left\Vert u_{q}\right\Vert _{\infty}\leq B\mu
(\Omega)^{\frac{1}{p}},
\]
where $A$ and $B$ are positive constants depending only on $N$ and $p.$ If
$\left\vert \Omega\right\vert \not =1$ we have%
\[
\lim_{q\rightarrow0^{+}}\lambda_{q}(\Omega)=\lim_{q\rightarrow0^{+}}%
(\lambda_{q}(\Omega)\left\vert \Omega\right\vert ^{\frac{p}{q}})\left(
\lim_{q\rightarrow0^{+}}\left\vert \Omega\right\vert ^{-\frac{p}{q}}\right)
=\mu(\Omega)\left(  \lim_{q\rightarrow0^{+}}\left\vert \Omega\right\vert
^{-\frac{p}{q}}\right)  .
\]
Therefore, in this case, we readily obtain%
\begin{equation}
\lim_{q\rightarrow0^{+}}\lambda_{q}(\Omega)=\left\{
\begin{array}
[c]{ll}%
\infty & \mathrm{if}\quad\left\vert \Omega\right\vert <1\\
0 & \mathrm{if}\quad\left\vert \Omega\right\vert >1.
\end{array}
\right.  \label{limlamb}%
\end{equation}

Hence, by combining (\ref{limlamb}) with Lemma \ref{linfty} and Lemma
\ref{lowb} we conclude that
\[
\lim_{q\rightarrow0^{+}}\left\Vert u_{q}\right\Vert _{\infty}=\left\{
\begin{array}
[c]{ll}%
\infty & \mathrm{if}\quad\left\vert \Omega\right\vert <1\\
0 & \mathrm{if}\quad\left\vert \Omega\right\vert >1.
\end{array}
\right.
\]

We remark that (\ref{limlamb}) is also simple to prove without using Theorem
\ref{theoexist}. In fact, in the case $\left\vert \Omega\right\vert <1$ the
monotonicity of the function $q\mapsto\lambda_{q}(\Omega)\left\vert
\Omega\right\vert ^{\frac{p}{q}}$ implies that
\[
\lim_{q\rightarrow0^{+}}\lambda_{q}(\Omega)=\lim_{q\rightarrow0^{+}}%
\lambda_{q}(\Omega)\left\vert \Omega\right\vert ^{\frac{p}{q}}\left\vert
\Omega\right\vert ^{-\frac{p}{q}}\geq\lambda_{1}(\Omega)\left\vert
\Omega\right\vert ^{p}\lim_{q\rightarrow0^{+}}\left\vert \Omega\right\vert
^{-\frac{p}{q}}=\infty.
\]
This is the proof given in \cite{Anello}. As for the case $\left\vert
\Omega\right\vert >1,$ take $v\in W_{0}^{1,p}(\Omega)\cap C(\overline{\Omega
})$ such that $v>0$ in $\Omega$ and then define $\Omega_{\epsilon}:=\left\{
x\in\Omega:v(x)\geq\epsilon\right\}  ,$ where $\epsilon>0$ is such that
$\left\vert \Omega_{\epsilon}\right\vert >1.$Then,
\[
0<\lambda_{q}(\Omega)\leq\frac{\int_{\Omega}\left\vert \nabla v\right\vert
^{p}\mathrm{d}x}{\left(  \int_{\Omega}\left\vert v\right\vert ^{q}%
\mathrm{d}x\right)  ^{\frac{p}{q}}}\leq\frac{\int_{\Omega}\left\vert \nabla
v\right\vert ^{p}\mathrm{d}x}{\left(  \int_{\Omega_{\epsilon}}\epsilon
^{q}\mathrm{d}x\right)  ^{\frac{p}{q}}}=\frac{\int_{\Omega}\left\vert \nabla
v\right\vert ^{p}\mathrm{d}x}{\epsilon^{p}\left\vert \Omega_{\epsilon
}\right\vert ^{\frac{p}{q}}}.
\]
Thus, by making $q\rightarrow0^{+},$ we obtain $\lim_{q\rightarrow0^{+}%
}\lambda_{q}(\Omega)=0$, since $\lim_{q\rightarrow0^{+}}\left\vert
\Omega_{\epsilon}\right\vert ^{-\frac{p}{q}}=0.$

\section{Acknowledgments}

The first author thanks the support of Funda\c{c}\~{a}o de Amparo \`{a}
Pesquisa do Estado de Minas Gerais (Fapemig)/Brazil (CEX-PPM-00165) and
Conselho Nacional de Desenvolvimento Cient\'{\i}fico e Tecnol\'{o}gico
(CNPq)/Brazil (483970/2013-1 and 306590/2014-0).

\end{document}